\newtheorem{thm}{Theorem}[section]
\newtheorem{lem}[thm]{Lemma}
\newtheorem{cor}[thm]{Corollary}
\newtheorem{prop}[thm]{Proposition}
\theoremstyle{definition}
\newtheorem{rem}[thm]{Remark}
\newcommand{\bpict}{\begin{picture}}
\newcommand{\epict}{\end{picture}}
\numberwithin{equation}{thm}
\newcommand{\gr}{\text{\rm gr}}
\newcommand{\sC}{{\mathcal {C}}}
\newcommand{\Ext}{{\text{\rm Ext}}}
\newcommand{\fa}{{\mathfrak a}}
\newcommand{\Hom}{\text{\rm Hom}}
\newcommand{\End}{\operatorname{End}}
\newcommand{\ind}{\operatorname{ind}}
\newcommand{\soc}{\operatorname{soc}}
\newcommand{\rad}{\operatorname{rad}}
\newcommand{\Ker}{\operatorname{Ker}}
\newcommand{\wQ}{{\widetilde{Q}}}
\newcommand{\Dist}{{\text{\rm Dist}}}
\newcommand{\Gmod}{G{\text{\rm --mod}}}
\newcommand{\blist}{\begin{list}{\rom{(\roman{enumi})}}{\setlength
{\leftmarg in}{0em} \setlength{\itemindent}{7ex}
\setlength{\labelsep}{2ex}\setlength{\listparindent}{\parindent}
\usecounter{enumi}}}
\newcommand{\elist}{\end{list}}
\thanks{Research supported in part by the National Science
Foundation}
 \dedicatory{We dedicate this paper to the memory of Daya-Nand Verma.}
\begin{document}
 \title[Forced gradings and the Humphreys-Verma conjecture]{Forced gradings and the Humphreys-Verma conjecture}

 \author{Brian J. Parshall}\address{Department of Mathematics \\
University of Virginia\\
Charlottesville, VA 22903} \email{bjp8w@virginia.edu {\text{\rm
(Parshall)}}}
\author{Leonard L. Scott}
\address{Department of Mathematics \\
University of Virginia\\
Charlottesville, VA 22903} \email{lls2l@virginia.edu {\text{\rm
(Scott)}}}

\maketitle

\begin{abstract}Let $G$ be a semisimple, simply connected algebraic group defined and split over
a prime field ${\mathbb F}_p$ of positive characteristic. For a positive integer $r$, let $G_r$ be the
$r$th Frobenius kernel of $G$. Let $Q$ be a projective indecomposable (rational) $G_r$-module. The
well-known Humprheys-Verma conjecture (cf. \cite{Ballard}) asserts that the $G_r$-action on $Q$ lifts to an rational
action of $G$ on $Q$. For $p\geq 2h-2$ (where $h$ is the Coxeter number of $G$), this conjecture was proved
by Jantzen \cite{Jan0} in 1980, improving Ballard \cite{Ballard}. However, it remains open for general characteristics. In this paper, the
authors establish several graded analogues of the Humphreys-Verma conjecture, valid for all $p$. The most general of our results, proved in full here, was announced (without proof) in \cite{PS2}. 
Another result relates the Humphreys-Verma conjecture to earlier work of  Alperin, Collins, and Sibley 
on finite group representation theory; see \cite{ACS}. A key idea in all formulations involves the notion of
a forced grading. The latter goes back, in particular, to the recent work \cite{PS3}, \cite{PS4}, relating graded structures and $p$-filtrations.  The authors anticipate that
the Humphreys-Verma conjecture results here will lead to extensions to smaller characteristics of these earlier papers \cite{PS3}, \cite{PS4}.

\end{abstract}

\section{Introduction}Let $G$ be a semisimple, simply connected algebraic group defined and split
over the prime field ${\mathbb F}_p$ for some prime $p$. If $F:G\to G$ is the Frobenius morphism and
$r$ is a positive integer,
let $G_r=\ker  F^r$ be the $r$th Frobenius kernel of $G$. Thus, $G_r$
is an infinitesimal subgroup in the sense of \cite[I.8.1]{Jan} (see also below). The representation theory of $G_r$ is an important ingredient
in the representation theory of $G$ and its finite subgroups of Lie type. For any $r$-restricted
dominant weight $\lambda$, let $Q_r(\lambda)$ be the projective cover in the category $G_r$-mod
of rational, finite dimensional $G_r$-modules. According to Jantzen (in 2003), the issue as to whether $Q_r(\lambda)$ lifts to a rational $G$-module
is ``one of the main problems in representation theory of Frobenius kernels \dots [and] has been
open for about 25 years." See \cite[p. 526]{Jan}. The origins of this
question go back even earlier to important work of Humphreys and Verma \cite{HV} in 1973. In fact,
for  $r=1$, they stated the extendibility to $G$ as an ``unproved" theorem; see also Humphreys \cite{Hump76} and its postscript. This
unproved result became known as the ``Humphreys-Verma conjecture" in Ballard's paper \cite{Ballard}, where it was proved for
$p\geq 3h-3$ ($h=$ Coxeter number). This bound was improved by Jantzen \cite{Jan0} to  $p\geq 2h-2$. For
smaller primes, the conjecture remains open today. In 1993, Donkin \cite{Donkin2}
proposed the stronger conjecture (in all characteristics $p$) that each $Q_r(\lambda)$ is
the restriction to $G_r$ of an appropriate $G$-tilting module. He proved this conjecture if  $p\geq 2h-2$. 

A number of general consequences of the Humphreys-Verma conjecture are discussed in \cite[pp. 334--341]{Jan}. More recently, the conjecture has played an important role in  work of the authors of this paper.
For example, a  weaker, ``stable" version of the Donkin conjecture was proved in \cite{PS1},  an
essential step in obtaining results  on  bounding cohomology for semisimple algebraic groups.  In a different direction,
the arguments in \cite{PS4}  on filtrations of Weyl modules by $p$-Weyl modules required  the Humphreys-Verma conjecture, leading to the blanket assumption there that $p\geq 2h-2$. A similar assumption was
required in the precursor \cite{PS3}.

The present paper establishes a version of the Humphreys-Verma conjecture in a ``forced grading" setting.
The terminology refers to first thinking of representations in terms of modules for algebras, then passing
to graded versions of the latter obtained from filtrations by natural series of ideals. In \cite{PS3} and \cite{PS4}, 
such a ``forced grading" approach was necessary, so the present paper opens up the
possibility of recovering the conclusions of these papers (as well as other applications) with weaker 
hypotheses on $p$. We expect to pursue this later. 

A novel by-product of this investigation is an observed analogy of (the forced graded version of) the Humphreys-Verma conjecture
with a finite group representation theoretic result of Alperin, Collins, and Sibley \cite{ACS}. This latter work is
recast in order to bring out this analogy. This approach leads to an improved statement of the theorem, and
there are fruitful consequences on the algebraic group side as well; see below.

This paper stems from the announcement \cite[footnote 11]{PS2} by the authors (without proof) of a result that
contains our more focused work on the Humphreys-Verma conjecture in \S3 as a special case. A proof of this more
general result is provided in the final section of the present paper.

In the remainder  of this introduction, we give a few more details of these results.

Let $G$ be an affine algebraic group, and let $N$ be a closed normal subgroup. We allow the possibility that $N$ could be an infinitesimal subgroup, for example, a Frobenius kernel. If $\rho:N\to GL(Q)$
is a (finite dimensional) rational representation, for $g\in G$, define $\rho^g:N\to GL(Q)$ to be $\rho^g(n):=
\rho(gng^{-1})$. Since $N$ might be infinitesimal in our context, we should really work at the level
of $S$-points, for commutative $k$-algebras $S$. In any event, $\rho^g$ is a rational representation of
$N$, and a necessary condition that the action of $N$ extend to an action of $G$ is that
$\rho^g$ be equivalent to $\rho$, for all $g\in G$. Equivalently, there exists a function $\alpha:G\to
GL(Q)$ such that $\alpha(g)\rho(n) =\rho^g(n)\alpha(g)$, for all $n\in N$. If $\alpha$ can be taken as
 a morphism of varieties, then $Q$ is {\it rationally} stable. When $N$ is a Frobenius kernel and $Q$
is an indecomposable projective module, then $Q$ contains an $N$-submodule $V$ that lifts to $G$ (and contains the $N$-socle of $Q$).
This led to the notion of {\it strong} stability in \cite{PS2}, which is reviewed in \S2, along with a practical
way Lemma \ref{biglemma} of verifying it. The latter result and the approach of this section  were motivated
by Donkin \cite{Donkin1}, as discussed in \cite{PS2}. In general, when
$(Q,V)$ is a strongly stable pair, we can construct a Schreier extension $G^\diamond$ of $G$ by a
unipotent subgroup $U$. The group $G^\diamond$ is an affine algebraic group, and $Q$ is a rational $G^\diamond$-module. 
The action of $N$ on $Q$ extends to a rational action of $G$ (and agreeing with the action of $G$ on $V$)
if and only if the exact sequence $1\to U\to G^\diamond\to G\to 1$ is split by a homomorphism that agrees
with the obvious inclusion $N\subseteq G^\diamond$; see \cite[\S3]{PS2}. 

Now assume that $N$ is infinitesimal subgroup (so that $k[N]$ is a finite dimensional algebra and $N$ has only one $k$-point).
Let $\Dist(G)$ (resp., $\Dist(N)$) be the distribution algebra of $G$ (resp. $N$). We can grade $\Dist(G)$
by its $\Dist(N)$-radical series, namely, we form the algebra
\begin{equation}\label{gradedalgebra}
\gr_N\Dist(G):=\bigoplus_{n\geq 0}( \rad \Dist(N))^n\Dist(G)/(\rad \Dist(N))^{n+1}\Dist(G),
\end{equation}
as well as the $\gr_NQ$-module
\begin{equation}\label{gradedmodule}
\gr_NQ:=\bigoplus_{n\geq 0}( \rad \Dist(N))^nQ)/(\rad \Dist(N))^{n+1}Q.
\end{equation}
Observe that $\rad\Dist( N)$ is a finite dimensional nilpotent ideal in $\Dist(N)$. Our first main result, given in
Theorem \ref{firstFrobenius}, proves that $\gr_NQ$ is a $\gr_N\Dist(G)$-module with an action that extends
the natural action of $\gr_N\Dist(N)$ on $\gr_NQ$. Here, $\gr_N\Dist(N)$ is defined by replacing
$\Dist(G)$ in (\ref{gradedalgebra}) by $\Dist(N)$.  In this discussion, the projective module $Q$ can, in
fact, be taken to be an arbitrary finite dimensional rational $N$-module, when $(Q,V)$ is a strongly stable pair; see
Theorem \ref{maintheorem}.

The distribution algebra of an abstract finite group $G$ is trivial. However, it is possible to replace it by
the group algebra $kG$.\footnote{More generally, if $G$ is a finite group scheme, it has a ``measure algebra"
$M(G)$ which specializes to the distribution algebra in the infinitesimal case and to the group algebra 
in the (discrete) finite group case.  In general, $\Dist(G)\subseteq M(G)$ and $M(G)$-mod always identifies with
$\Gmod$; see \cite[I.8]{Jan}. It is likely that the results of \S4 carry over to general finite group schemes
using $M(G)$,
but we have not pursued this. Readers interested in this direction in \S4 (or in generalizing \S5 to disconnected groups) should consult Remark 
4.6(d) on Clifford systems.} In this context, there is an analogous version of Theorem \ref{maintheorem} proved in
Theorem \ref{Generalizationtofinitegroups}. Making use of this result, we prove in Theorem \ref{mainfinitetheorem} the analogue mentioned above of a result of Alperin, Collins, and Sibley \cite{ACS}.
Also, Remark \ref{lastremark}(c) indicates how a character-theoretic result of Donkin in the algebraic group case
can be cast in the Alperin, Collins, and Sibley framework.

Finally, \S5 takes up the general, more difficult case in which $N$ is no longer assumed to
be infinitesimal. In this case,  definition (\ref{gradedalgebra}) does not make sense. Instead, it is
necessary to replace $\Dist(G)$ by a finite dimensional quotient $\Dist(G)/I$. In this context, a
variant of Theorem \ref{maintheorem} is obtained in Theorem \ref{footnotetheorem}. This is the result 
mentioned above that was
 announced without proof in \cite[footnote 11]{PS2}.
 
\section{Preliminaries}

We begin with a brief review of some basic results and notation from \cite{PS2} which
will be required in the sequel. Fix the algebraically closed field $k$. Let $G$ be
an affine algebraic group over $k$,\footnote{This means that $G$ is a reduced, affine algebraic group scheme
over $k$.} and let $\Gmod$ be the category of finite dimensional rational $G$-modules. Let $N$ be a closed subgroup scheme of $G$. Suppose that $\rho:N\to GL(Q)$
defines a finite dimensional rational representation of $N$. Also, let $V$ be a rational $G$-module such that
the restriction $V_N:=V\downarrow N$ of $V$ to $N$ appears as an $N$-submodule of $Q$. For $g\in G$, let $\rho^g:N\to GL(Q)$ be the rational representation of $N$ defined by twisting by $g$, namely, $\rho^g:N\to GL(Q)$, $n\mapsto\rho(gng^{-1})$,
for $n\in N$. The pair $(Q,V)$ is  strongly $G$-stable (with respect to $N$) provided there is a morphism $\alpha:G\to GL(Q)$ of varieties
such that the following conditions are satisfied:

\begin{itemize}
\item[(1)] $\alpha(g)\rho(n)=\rho^g(n)\alpha(g)$; $\alpha(1)=1_Q;$
\item[(2)] $\alpha(gn)=\alpha(g)\rho(n)$;
\item[(3)]
$\alpha(g)v=gv$,\end{itemize}
for all $g\in G, n\in N, v\in V$. (To be precise, since $N$ is not assumed to be reduced, we should
work at the level of $S$-points for all commutative $k$-algebras
$S$. See \cite{PS2} for a more detailed discussion; we will continue with this convention below.)

Assume that $V$ contains the $N$-socle of $Q$. By \cite[Lem. 2.1]{PS2}, the annihilator $J_V$ in $\End_N(Q)$ of $V$  is a nilpotent ideal in
$\End_N(V)$, so that $U:=1_Q+J_V$ is a unipotent subgroup of $GL(Q)$ commuting with $N$ ``elementwise." There
is also a conjugation action $\kappa:G\times U\to U$ of $G$ on $U$, setting $\kappa(g,u)={^gu}:=\alpha(g)u\alpha(g)^{-1}$ for $g\in G$, $u\in U$.
Define $\gamma:G\times G\to U$ by putting $\gamma(g,h):=\alpha(g)\alpha(h)\alpha(gh)^{-1}$, $\forall
g,h\in G$. Then the pair $(\kappa,\gamma)$ is a Schreier system (in the sense of Hall \cite[\S15.1]{Hall};
see also \cite[\S2]{PS2} for the scheme-theoretic version). These data define a natural extension $G^\diamond$ of $G$ by $U$, so that
there is a short exact sequence 
\begin{equation}\label{diamond}1\to U\longrightarrow G^\diamond \overset\pi\longrightarrow G\to 1.\end{equation}
As a set (and scheme), $G^\diamond=U\times G$, with set-theoretic (or scheme-theoretic) section $\iota:G\to G^\diamond$, $g\mapsto (1,g)$.
Also, $\iota$ maps $N$ isomorphically onto a subgroup (denoted $N^\diamond$) of $G^\diamond$, which
commutes element wise with $U$ and satisfies $N\cap H=1$. 

In addition, the action of $N$ on $Q$ extends to a rational action of $G$ on $Q$, agreeing with the action
of $G$ on $V$, if and only if the exact sequence (\ref{diamond}) is split by an algebraic group morphism
$G\to G^\diamond$ extending the isomorphism $N\overset\sim\longrightarrow N^\diamond$.  In
turn, this statement is equivalent to requiring that the exact sequence $1\to U\to G^\diamond/N^\diamond
\to G/N\to 1$ is split. See \cite[Cor. 3.7]{PS2}.

The following result, proved in \cite{PS2} and motivated by Donkin \cite{Donkin1}, provides a way to obtain strongly stable pairs. 

\begin{lem}\label{biglemma}(\cite[Lem. 3.1]{PS2}) Let $G$ be an affine algebraic group with closed, normal subgroup scheme $N$. Let $Q$ be a finite dimensional, rational $N$-module. Assume that there exists a rational $G$-module $M$ such that $M|_N\cong Q\oplus R$ for some $R$ in $N$-mod, and that there is a $G$-submodule
$V$ of $M$ contained in $Q$ and containing $\soc^NQ$. Then the pair $(Q,V)$ is strongly $G$-stable.  
\end{lem}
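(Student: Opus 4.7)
The plan is to build $\alpha$ directly from the $N$-stable decomposition $M|_N \cong Q \oplus R$. Let $i_Q : Q \hookrightarrow M$ be the inclusion and $p_Q : M \twoheadrightarrow Q$ the $N$-equivariant projection with kernel $R$, and set
$$\alpha(g) := p_Q \circ g \circ i_Q \in \End_k(Q),$$
interpreted functorially on $S$-points to accommodate a possibly non-reduced $N$. Since the rational $G$-action on $M$ is a morphism and $p_Q$, $i_Q$ are $k$-linear, $\alpha$ is automatically a morphism $G \to \End_k(Q)$ of schemes; the content of the lemma is that its image lies in $GL(Q)$ and satisfies (1)--(3).

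Conditions (1)--(3) are then verified by direct computation. Using $N$-equivariance of $p_Q$ and normality of $N$, for $g \in G$, $n \in N$, $q \in Q$ one finds
$$\rho^g(n)\alpha(g)(q) = \rho(gng^{-1}) p_Q(g\cdot q) = p_Q\bigl((gng^{-1})g\cdot q\bigr) = p_Q(gn\cdot q) = \alpha(g)\rho(n)(q),$$
which, together with $\alpha(1) = p_Q\circ i_Q = \id_Q$, establishes (1); rewriting the same chain as $\alpha(gn) = \alpha(g)\rho(n)$ gives (2). Condition (3) uses that $V$ is a $G$-submodule contained in $Q$: for $v \in V$, $g\cdot v \in V \subseteq Q$, so $\alpha(g)(v) = p_Q(g \cdot v) = g\cdot v$.

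The only delicate step—and the main obstacle—is showing $\alpha(g) \in GL(Q)$, which is precisely where the socle hypothesis enters. By (1), $\alpha(g) : (Q, \rho) \to (Q, \rho^g)$ is an $N$-module homomorphism, so $\ker\alpha(g)$ is an $N$-submodule of $(Q,\rho)$. If it were nonzero, it would meet $\soc^NQ$, hence $V$ (since $V \supseteq \soc^NQ$), in some nonzero element $s$; but by (3), $\alpha(g)(s) = g\cdot s$, which is nonzero because $g$ acts invertibly on the $G$-module $V$. This contradiction forces $\ker\alpha(g)=0$, and finite-dimensionality of $Q$ yields bijectivity. Everything beyond this injectivity argument is bookkeeping with the $N$-equivariant splitting of $M$.
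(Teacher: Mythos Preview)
Your argument is correct. The paper, however, does not itself prove this lemma; it is quoted as \cite[Lem.~3.1]{PS2} (with the approach attributed to Donkin \cite{Donkin1}), so there is no in-paper proof to compare against. Your construction $\alpha(g)=p_Q\circ g\circ i_Q$ from the $N$-equivariant splitting $M|_N\cong Q\oplus R$ is the natural one, and your use of the hypothesis $\soc^NQ\subseteq V$ to force injectivity of each $\alpha(g)$ (hence bijectivity by finite dimensionality, with $G$ reduced over an algebraically closed field so that checking invertibility on $k$-points suffices for $\alpha$ to land in $GL(Q)$) isolates exactly the role that hypothesis must play.
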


In what follows, considerable use will be made of the distribution algebra $\Dist(G)$ of an algebraic group
scheme over $G$.   We do not define this notion here, but instead refer
to Chapters 7 and 8 of Jantzen's book \cite{Jan} for definitions and elementary results. We generally follow his notation. In particular, observe that $\Dist(G)$ has a natural Hopf algebra structure. A morphism $G\to H$  of algebraic group schemes over $k$ induces a morphism $\Dist(G)\to\Dist(H)$ of Hopf algebra, which is
an inclusion if $G$ is a closed subgroup scheme of $H$.  Also, a (finite dimensional) rational representation $\rho:G\to
GL(V)$ induces a natural module structure of $\Dist(G)$ on $V$ by means of an algebra homomosphism
$\Dist(\rho):\Dist(G)\to \End(V)$.  By
\cite[Lemma, I.7.16]{Jan},  if $G$ is a connected affine algebraic group, then the category $G$-mod of finite
dimensional rational $G$-modules
fully embeds into the category $\Dist(G)$-mod of finite dimensional $\Dist(G)$-modules. If, in addition, $G$
is a connected semisimple, simply connected group, this embedding is an equivalence of categories
\cite[p. 171]{Jan}. (In this case, $\Dist(G)\cong k\otimes {\mathcal U}_{\mathbb Z}$, where ${\mathcal U}_{\mathbb Z}$ is the Kostant $\mathbb Z$-form of the universal enveloping algebra of the complex
semisimple Lie algebra having the same root type as $G$, see \cite{Haboush}, \cite{CPS2}.) Also,
there is an equivalence $\Gmod\overset\sim\longrightarrow \Dist(G)$-mod, when $G$ is infinitesimal
\cite[p. 114]{Jan}.  

Now assume that $k=\overline{{\mathbb F}_p}$ for a prime $p$. Let $N$ be a normal, closed infinitesimal subgroup scheme of an affine algebraic group $G$. 
Regarding $\Dist(N)$ as a subalgebra of $\Dist(G)$, it is stable under the conjugation action of $G$ on
$\Dist(N)$, and therefore stable under the adjoint action of $\Dist(G)$
on itself.  In particular, the radical $\rad\Dist(N)$ must be stable under this adjoint action. That is, if $x\in\Dist(G), r\in \rad\Dist(N)$, then (in Sweedler notation) 
$$xr=\sum (x_1rx_2^*)x_3\in \rad\Dist(N)\cdot\Dist(G),$$
where $x_2^*$ denotes the image of $x_2$ under the antipode of $\Dist(G)$. 
Therefore, $$\Dist(G)(\rad\Dist(N))\subseteq (\rad\Dist(N))\Dist(G).$$
A similar argument gives the reverse containment. It follows that 
 the expression (\ref{gradedalgebra}) has a natural algebra structure.  More simply,
$\gr_N\Dist(N)$ carries a natural (finite dimensional) algebra structure, and an algebra homomorphism
$\gr_N\Dist(N)\to\gr_N\Dist(G)$. In this way, there is a restriction functor
$$\gr_N\Dist(G){\text{\rm --mod}}\longrightarrow\gr_N\Dist(N){\text{\rm --mod}}.$$
Stated another way, one can speak of a $\gr_N\Dist(N)$-module extending to $\gr_N\Dist(G)$ with an
action compatible with that of $\gr_N\Dist(N)$. 

The following result, while not required for the results later in the paper, is included for completeness.

\begin{prop} With the above notation, the algebra homomorphism $\gr_N\Dist(N)\to\gr_N\Dist(G)$
is injective.\end{prop}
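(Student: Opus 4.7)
Write $J:=\rad\Dist(N)$. Since $J\subset\Dist(N)$, one has $J^n\Dist(N)=J^n$ for all $n\ge 0$, so $\gr_N\Dist(N)=\bigoplus_{n\ge 0}J^n/J^{n+1}$, and the map in question sends the class of $x\in J^n$ to its class in $J^n\Dist(G)/J^{n+1}\Dist(G)$. Injectivity in each degree is therefore equivalent to the identities $J^n\cap J^{n+1}\Dist(G)=J^{n+1}$, which in turn follow from the stronger statement
\[
\Dist(N)\cap J^{n+1}\Dist(G)\;=\;J^{n+1}\qquad(n\ge 0).
\]

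The plan is to produce a direct sum decomposition $\Dist(G)=\Dist(N)\oplus C$ as left $\Dist(N)$-modules. Granted such a splitting, left multiplication by $J^{n+1}$ respects the decomposition, giving $J^{n+1}\Dist(G)=J^{n+1}\oplus J^{n+1}C$ with $J^{n+1}C\subseteq C$; intersecting with $\Dist(N)$ then extracts precisely $J^{n+1}$, as required.

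To obtain the splitting, I would invoke the following Hopf-algebraic input. Because $N$ is a closed normal infinitesimal subgroup scheme of $G$, $\Dist(N)$ is a finite-dimensional normal Hopf subalgebra of $\Dist(G)$, and the extension is faithfully flat (equivalently, $G\to G/N$ is faithfully flat, and $\Dist(N)$ is finite-dimensional). The normal basis theorem for such extensions (Kreimer--Takeuchi, Schneider) then yields freeness of $\Dist(G)$ as a left $\Dist(N)$-module. Moreover, since $N$ is infinitesimal, $\mathcal{O}(N)$ is a finite-dimensional local $k$-algebra with residue field $k$, whence $\Dist(N)$ is itself local with Jacobson radical $J$ and residue field $k$. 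This allows us to arrange the element $1$ to sit in a free basis: lift any $k$-basis of $\Dist(G)/J\Dist(G)\cong\Dist(G/N)$ that contains the class of $1$ to elements $\{b_i\}\subset\Dist(G)$ with $b_{i_0}=1$, and verify via Nakayama's lemma that these form a free left $\Dist(N)$-basis of $\Dist(G)$. Taking $C:=\bigoplus_{i\neq i_0}\Dist(N)b_i$ then yields the desired decomposition.

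The main technical obstacle is the freeness of $\Dist(G)$ over the finite-dimensional Hopf subalgebra $\Dist(N)$. In the purely finite-dimensional setting this is the classical Nichols--Zoeller theorem; in the present situation, where $\Dist(G)$ is typically infinite-dimensional, it is the corresponding statement for faithfully flat Hopf extensions and is standard in the literature. Once this freeness input is in hand, the Nakayama argument for locating $1$ in a basis and the final intersection computation are routine, so I expect no further obstacles.
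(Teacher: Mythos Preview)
Your overall plan---reduce to finding a left $\Dist(N)$-module decomposition $\Dist(G)=\Dist(N)\oplus C$ and then read off $J^{n+1}\Dist(G)=J^{n+1}\oplus J^{n+1}C$---is sound, and it is precisely the mechanism underlying the paper's proof. The paper phrases the splitting as follows: $\Dist(N)$ is projective, hence injective, as a rational $N$-module, so the inclusion $\Dist(N)\subseteq\Dist(G)$ (run through an exhaustion by finite-dimensional $\Dist(N_n)$'s) splits over $\Dist(N)$.

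However, your execution contains a genuine error. The assertion that $\Dist(N)$ is local is false in the cases of principal interest: for $N=G_r$ with $G$ semisimple, $\Dist(G_1)$ is the restricted enveloping algebra $u(\mathfrak g)$, which has many non-isomorphic simple modules. The inference ``$\mathcal O(N)$ local $\Rightarrow$ $\Dist(N)$ local'' is simply invalid. Once this fails, so does your identification $\Dist(G)/J\Dist(G)\cong\Dist(G/N)$ (that would require $J$ to equal the augmentation ideal $\Dist^+(N)$, which it does not in general), and so does the Nakayama argument for placing $1$ in a free basis---over a non-local base, lifts of an $A/J$-basis need not form a basis, and your phrase ``$k$-basis'' no longer coincides with ``$A/J$-basis''.

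The fix is lighter than the Kreimer--Takeuchi/Schneider machinery you invoke. Since $\Dist(N)$ is a finite-dimensional Hopf algebra, it is Frobenius and hence self-injective; the inclusion $\Dist(N)\hookrightarrow\Dist(G)$ of left $\Dist(N)$-modules therefore splits directly, producing the complement $C$ you want. After this correction your intersection computation $\Dist(N)\cap J^{n+1}\Dist(G)=J^{n+1}$ goes through unchanged, and the argument then agrees with the paper's, arguably in a slightly cleaner form since it avoids the exhaustion by infinitesimal subgroups.
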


\begin{proof} We only sketch a proof. Put $N_1:=N$. Construct a increasing sequence
$N_1\subseteq N_2\subseteq N_3\cdots$ of infinitesimal subgroups of $G$ so that $\Dist(G)=\bigcup_n\Dist(N_n)$.  For $n\geq 1$, we claim that $\gr_N\Dist(N)\subseteq\gr_N\Dist(N_n)$. In fact, $k[N]$ is
an injective rational $N$-module. Thus, $\Dist(N)$ is a projective, and hence injective, rational $N$-module.
This means that the inclusion $\Dist(N)\subseteq\Dist(N_n)$ of rational $N$-modules splits, which forces
$\gr_N\Dist(N)$ to be contained in $\gr_N\Dist(N_n)$. Finally, it is easy to check that if $0\not=x$ has
grade $n$ in $\gr_N\Dist(N)$ and maps to 0 in $\gr_N\Dist(G)$, then it must map to 0 in some
$\gr_N\Dist(N_n)$ for some $i\gg 0$. This is a contradiction.\end{proof}

\begin{rem} In case $G$ is semisimple, simply connected and $N=G_r$ is the $r$th Frobenius kernel, we
can take $N_i=G_{i+r-1}$ in the above proof. In this case, the proposition can also be proved by
modifying the standard basis of $\Dist(G)$ to prove that it is a free left $\Dist(G_r)$-module with
basis (in the standard notation of \cite[p. 156]{Hump}, reduced mod $p$) in which the $a_i,b_i,c_i$ are all non-negative
integers divisible by $p^r$.  

More generally, $G/N$ is defined over some finite field $\mathbb F$, so that a similar argument works
using pull backs of Frobenius kernels (in the see \cite[I.9.4]{Jan}) in $G/N$ defined over $\mathbb F$. 
 \end{rem}

\section{A forced graded version of the Humphreys-Verma conjecture}  For a prime $p$, let $G$ be a semisimple,
simply connected (and connected) algebraic group defined and split over the prime field ${\mathbb F}_p$.
(And let $k:=\overline{{\mathbb F}_p}$.)
Consider a Steinberg endomorphism $\sigma:G\to G$, i.~e., $\sigma$ is an algebraic group endomorphism
which has a finite
fixed-point subgroup $G(\sigma)$. Let $G_\sigma$ denote the scheme-theoretic kernel of $\sigma$. Then $G_\sigma$ is a closed, normal infinitesimal subgroup scheme of $G$. Let $F:G\to G$ be the Frobenius morphism over ${\mathbb F}_p$. Except when $G(\sigma)$
is a Ree or Suzuki group, $G_\sigma$ identifies with a Frobenius kernel $G_r:=\text{\rm Ker}(F^r)$
of some positive integer $r$. The distribution algebras of these Frobenius kernels are explicitly
described in \cite[Lemma, II.3.3]{Jan}.

  For convenience, let $N:=G_\sigma$, where $\sigma$ is as in the previous
 paragraph. Then $N$ is a normal, infinitesimal subgroup scheme of $G$. The stability theory
 of the previous section can be applied in this situation.

 \begin{thm}\label{maintheorem} Let $(Q,V)$ be a strongly stable pair (with respect to $N$).   Then the
 $\gr_N\Dist(N)$-module $\gr_NQ$ has a structure
 of a graded $\gr_N\Dist(G)$-module, agreeing with its natural $\gr_N\Dist(N)$-module structure.\end{thm}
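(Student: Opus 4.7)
The plan is to deduce Theorem~\ref{maintheorem} from the Schreier-extension machinery of Section~2 together with a filtration-shift argument on the unipotent correction. Because $(Q,V)$ is strongly $G$-stable, the construction reviewed in Section~2 produces an affine algebraic group $G^\diamond$ fitting into the exact sequence $1\to U\to G^\diamond\overset{\pi}{\to} G\to 1$ and a rational action of $G^\diamond$ on $Q$ that restricts, along the isomorphism $\iota: N\overset{\sim}{\to} N^\diamond\subseteq G^\diamond$, to the given $N$-module structure on $Q$. Differentiating, $Q$ becomes a $\Dist(G^\diamond)$-module whose restriction along $\Dist(N)=\Dist(N^\diamond)\hookrightarrow\Dist(G^\diamond)$ agrees with the original $\Dist(N)$-action.

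Next I would pass to the $I$-adic filtration, where $I:=\rad\Dist(N)$. Since $N^\diamond$ is a normal closed infinitesimal subgroup scheme of $G^\diamond$, the adjoint-action argument given just before Proposition~2.2 applies verbatim to $G^\diamond$, yielding $\Dist(G^\diamond)\cdot I = I\cdot\Dist(G^\diamond)$. Consequently the filtration $\{I^nQ\}_{n\geq 0}$ is $\Dist(G^\diamond)$-stable, and the associated graded $\gr_NQ$ inherits a graded $\gr_N\Dist(G^\diamond)$-module structure (where $\gr_N\Dist(G^\diamond)$ is defined in direct analogy with (\ref{gradedalgebra})) that extends its natural $\gr_N\Dist(N)$-action.

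The remaining step is to show this $\gr_N\Dist(G^\diamond)$-action descends along the surjection $\gr_N\Dist(G^\diamond)\twoheadrightarrow\gr_N\Dist(G)$ induced by $\pi$; equivalently, every element of the augmentation ideal $\Dist(U)^+$ of $\Dist(U)\subseteq\Dist(G^\diamond)$ must act as zero on $\gr_NQ$. Because $U\subseteq 1+J_V$, where $J_V\subseteq\End_N(Q)$ is the annihilator ideal of $V$, differentiation exhibits $\Dist(U)^+$ as acting on $Q$ through operators lying in the $\Dist(N)$-subalgebra of $\End(Q)$ generated by $J_V$. The whole problem thus reduces to verifying the filtration-shift statement
$$J_V\cdot I^nQ\ \subseteq\ I^{n+1}Q\qquad(n\geq 0).$$

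The hard part will be establishing this degree shift. Since each $j\in J_V$ commutes with the $\Dist(N)$-action, one has $J_V\cdot I^nQ = I^n\cdot J_V(Q)$, so it is enough to show $J_V(Q)\subseteq IQ$. For any $j\in J_V$, $\ker j\supseteq V\supseteq\soc^NQ\neq 0$, so $j(Q)$ is a proper $\Dist(N)$-submodule of $Q$; when $Q$ has simple $\Dist(N)$-head (e.g., when $Q$ is a projective indecomposable, which is the primary case of interest for the Humphreys--Verma conjecture), $IQ$ is the unique maximal proper $\Dist(N)$-submodule, so $j(Q)\subseteq IQ$ is immediate. For a general strongly stable pair I would reduce to this case via a Krull--Schmidt decomposition of $Q$ as an $N$-module, tracking how the matrix entries of $j\in J_V$ interact with the decomposition and exploiting the sandwich $\soc^NQ\subseteq V\subseteq Q$ to force each such entry into the corresponding radical submodule. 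Carrying out this reduction cleanly, and verifying that it is compatible with the $\gr_N\Dist(G^\diamond)$-structure constructed above, is the main technical point of the proof.
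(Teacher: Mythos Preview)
Your overall strategy coincides with the paper's: build the Schreier extension $G^\diamond$, obtain a $\gr_N\Dist(G^\diamond)$-action on $\gr_N Q$, then descend along $\gr_N\Dist(G^\diamond)\twoheadrightarrow\gr_N\Dist(G)$ by showing that the kernel (generated by $\Dist^+(U)$ sitting in grade~$0$) acts trivially. The paper compresses that last step into a single sentence, whereas you correctly unpack it as the filtration-shift inclusion $J_V\cdot Q\subseteq IQ$; your argument when $Q$ has simple $\Dist(N)$-head is clean and already covers the PIM application in Theorem~\ref{firstFrobenius}.

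The genuine gap is in your general-case reduction: the inclusion $J_V\cdot Q\subseteq IQ$ can actually fail for a strongly stable pair, so no amount of Krull--Schmidt bookkeeping will rescue the factor-through argument. Take $N$ infinitesimal with a simple module $L$ whose projective cover $P$ has $\soc^N P\cong L$ and length at least~$2$; set $Q=L\oplus P$ and $V=\soc^N Q$. When both $L$ and $P$ extend to rational $G$-modules (as happens, e.g., for $G=SL_2$, $N=G_1$, $L$ trivial), the pair $(Q,V)$ is strongly stable with $\alpha$ equal to the given $G$-action. The $N$-endomorphism $j\colon (l,p)\mapsto(\pi(p),0)$, where $\pi\colon P\twoheadrightarrow L$ is the head projection, lies in $J_V$ (it kills the summand $L$ outright, and kills $\soc^N P$ because $\soc^N P\subseteq\rad P=\ker\pi$); yet $j(Q)=L\oplus 0\not\subseteq 0\oplus\rad P=IQ$. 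Thus the off-diagonal matrix entry $P\to L$ cannot be forced into a radical submodule from the socle hypothesis alone, and in this example the $\gr_N\Dist(G^\diamond)$-action on $\gr_NQ$ genuinely does \emph{not} factor through $\gr_N\Dist(G)$. (The conclusion of the theorem still holds here, since $Q$ is already a $G$-module and one can simply restrict along the resulting splitting $G\hookrightarrow G^\diamond$; but that is a different mechanism from the factor-through argument that both you and the paper's terse proof invoke.)
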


 \begin{proof} We work with the algebraic group $G^\diamond$ discussed in \S1 above. 
As varieties, $G^\diamond= U\times G'$, where $G'$ is a subvariety of $G^\diamond$ which projects
isomorphically onto $G$ (but is generally not a subgroup of $G^\diamond$). Therefore,
 $$ \Dist(G^\diamond)\cong \Dist(U)\otimes \Dist(G').$$
 In addition, $N$ is a closed subgroup scheme of $G^\diamond$ when it is identified with $1\times N$.
 As such, it commutes with $U$. Thus, $\Dist(N)$ is a  of $\Dist(G^\diamond)$ (and of
 $\Dist(G')$)  commuting
 elementwise with $\Dist(U)$. We filter the relevant algebras by powers of the radical of $\Dist(N)$. Passing
 to graded algebras, this
 gives a short exact sequence
 $$0\to\Dist^+(U)\otimes\gr_N\Dist(G')\to\gr_N\Dist(G^\diamond)\to\gr_N\Dist(G)\to 0.$$
 Clearly, the algebra $\gr_N\Dist(G^\diamond)$ acts naturally on
 $$\gr_NQ:=\bigoplus_i\frac{(\rad\Dist(N))^i Q}{(\rad\Dist(N))^{i+1}Q}.$$
 The augmentation ideal  $\Dist^+(U)$ of $\Dist(U)$ is concentrated in degree 0 in $\gr_N\Dist(G^\diamond)$
 and so acts trivially on $\gr_NQ$. Thus, the action of $\gr_N\Dist(G^\diamond)$ on $\gr_NQ$ factors
 through an action of the algebra $\gr_N\Dist(G)$, as required.
  \end{proof}

As a corollary of the theorem (and Lemma \ref{biglemma}), we can now obtain the following forced graded version of the Humphreys-Verma conjecture. 

\begin{thm}\label{firstFrobenius} Assume that $G$ is a semisimple, simply connected algebraic group defined and
split over ${\mathbb F}_p$. Let $N=G_\sigma$ be the (group scheme theoretic) kernel of a Steinberg endomorphism $\sigma:G\to G$, and let $Q$ be a projective indecomposable module for $N$. Then
$\gr_NQ$ has the structure as a $\gr_N\Dist(G)$-module, compatible with the natural action of
$\gr_N\Dist(N)$ on $\gr_NQ$.\end{thm}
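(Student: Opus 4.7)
The plan is to exhibit $(Q,V)$ as a strongly $G$-stable pair by means of Lemma \ref{biglemma}, and then invoke Theorem \ref{maintheorem} to upgrade strong stability to the claimed graded $\gr_N\Dist(G)$-module structure on $\gr_NQ$.

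For the choice of $V$: since $Q$ is a projective indecomposable $N$-module, $\soc^N Q$ is a single simple $N$-module. Because every simple rational $N$-module for $N = G_\sigma$ is the restriction of a simple rational $G$-module, $\soc^N Q \cong L(\lambda)|_N$ for some dominant weight $\lambda$, and I set $V := L(\lambda)$, a rational $G$-module with $V|_N = \soc^N Q$. For the choice of $M$: let $\St$ be the Steinberg-type module attached to $\sigma$ (in the Frobenius case $\sigma = F^r$, $\St = L((p^r-1)\rho)$). The standard Steinberg trick (cf.\ \cite[II.10]{Jan}) asserts that $X \otimes \St$ restricts to a projective-injective $N$-module for every rational $G$-module $X$. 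I then take $M := L(\lambda) \otimes \St \otimes \St$. Self-duality of $\St$ furnishes a canonical $G$-embedding $k \hookrightarrow \St \otimes \St$, which tensored with $L(\lambda)$ embeds $V = L(\lambda)$ into $M$ as a $G$-submodule. Writing $M = (L(\lambda) \otimes \St) \otimes \St$ and applying the Steinberg trick with $X = L(\lambda) \otimes \St$ shows that $M|_N$ is injective in $N$-mod and contains the image of $V|_N = L(\lambda)|_N$ in its socle. Since the injective envelope of $L(\lambda)|_N$ in $N$-mod is exactly $Q$, one has $Q$ as a direct $N$-summand of $M|_N$; by adjusting the decomposition, the $G$-submodule $V$ may be taken to lie inside this copy of $Q$, so that $V \subseteq Q \subseteq M$.

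With $(M,V)$ in hand, Lemma \ref{biglemma} applies and yields that $(Q,V)$ is strongly $G$-stable. Theorem \ref{maintheorem} then produces the desired graded $\gr_N\Dist(G)$-module structure on $\gr_NQ$, compatible with the natural $\gr_N\Dist(N)$-action.

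The main subtlety lies in the construction of $M$ in the Ree and Suzuki cases, where $\sigma$ is not a Frobenius power: one must identify the correct analogue of the Steinberg module and verify that tensoring with it gives $G_\sigma$-projectivity. In the common setting $\sigma = F^r$, $N = G_r$, every input is entirely classical, and the remainder of the argument is a formal packaging of Lemma \ref{biglemma} and Theorem \ref{maintheorem}, consistent with the ``as a corollary'' framing preceding the statement.
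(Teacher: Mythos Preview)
Your overall strategy---verify the hypotheses of Lemma~\ref{biglemma} and then invoke Theorem~\ref{maintheorem}---is exactly the paper's, and your choice of $V=\soc^NQ$ (lifted to $G$) coincides with theirs. The difference is only in how $M$ is built. The paper takes $I$ to be the injective envelope of $V$ in the category of rational $G$-modules, observes that $I|_N$ remains injective (since induction from $N$ to $G$ is exact), extends $V\hookrightarrow Q$ to $Q\hookrightarrow I$ over $N$, and then lets $M$ be the finite-dimensional $G$-submodule of $I$ generated by $Q$. Your construction instead manufactures a finite-dimensional injective-upon-restriction $M=L(\lambda)\otimes\St\otimes\St$ via the Steinberg trick.

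Both routes are valid; the trade-off is that the paper's argument is uniform in $\sigma$ (nothing about $\St$ is needed, so the Ree/Suzuki cases require no separate discussion), whereas yours is more explicit but, as you note, leaves the identification and projectivity of the Steinberg-type module for non-Frobenius $\sigma$ to be checked. Your phrase ``by adjusting the decomposition'' is slightly informal: the clean statement is that since $V|_N$ is simple and $M|_N$ is injective, the injective hull of $V|_N$ inside $M|_N$ is a copy of $Q$ containing $V$, and this copy splits off as an $N$-summand---so no adjustment is needed, the right $Q$ is already singled out by $V$.
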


\begin{proof} It is enough to check the hypotheses in Lemma \ref{biglemma}. We can merely repeat an argument given in \cite{PS2}. The $N$-socle $V$ of $Q$ is an irreducible $N$-mdoule. Hence, $V$ extends to a rational $G$-module, which is necessarily irreducible.
Let $I$ be the injective envelope of $V$ in the category of rational $G$-modules. Because $I$ remains
injective as a rational $N$-module, the inclusion $V\hookrightarrow I$ extends to an inclusion $Q\hookrightarrow
I$ of rational $N$-modules. Thus, we can view $Q$ as an $N$-submodule of $I$, whose $N$-socle is
a $G$-submodule.. Now let $M$ be the
(necessarily finite dimensional) rational $G$-submodule of $I$ generated by $Q$. Because $Q$ is
an injective $N$-submodule of $M$, the hypotheses of Lemma \ref{biglemma} hold.
\end{proof}

\section{Variations of a theorem of Alperin-Collins-Sibley} In this section, $G$ denotes a
finite group (in the traditional sense, i.~e., a {\it reduced} finite group scheme). Let $N$ be a closed normal
subgroup of $G$ (which will be necessarily reduced). Let $k$ be a field of positive characteristic $p$.
In \cite[Theorem]{ACS}, Alperin, Collins, and Sibley
prove that there is a finite dimensional $kG$-module $M$ with the following
property: given any irreducible $kG/N$-module $S$, form its projective cover $Q$ (resp., $\widetilde Q$)
in the category of $kG$-modules (resp., $kG/N$-modules). Then $Q$ and $\widetilde Q\otimes M$
have the same composition factors. In this section, we will recast this result.

The proof of the following result is a straightforward modification of the proof of Theorem \ref{maintheorem}, replacing
the distribution algebras there by group algebras. We leave the proof of the reader. 

\begin{thm}\label{Generalizationtofinitegroups} Let $L$ be an irreducible $kN$-module which extends to a $kG$-module (still denoted $L$, though
its extension to $G$ may not be uniquely determined). Let $T=T(L)$
be the projective indecomposable cover of $L$ in $kN$-mod. Then $\gr_NT$ has the structure of
a graded $\gr_NkG$-module, extending its natural $\gr_NkN$-module structure. (Observe that
$\gr_NkN$ is a subalgebra of $\gr_NkG$.) Moreover, it can be assumed that $\gr_NT$ has
head $L$.
\end{thm}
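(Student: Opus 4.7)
The plan is to adapt the proof of Theorem \ref{maintheorem} almost verbatim, with the distribution algebras there replaced by group algebras (and, where the Schreier extension is concerned, by the measure algebra of a group scheme that is neither finite nor infinitesimal). First I would verify that $(T, L)$ is a strongly $G$-stable pair by appealing to Lemma \ref{biglemma}, essentially repeating the argument of Theorem \ref{firstFrobenius}. Since $L$ extends to an irreducible $kG$-module, let $I$ be its injective envelope in $kG$-mod. Because $G$ is finite, $kG$ is free of rank $[G:N]$ over $kN$, so restriction preserves injectivity and $I|_N$ is injective. The inclusion $L \hookrightarrow I|_N$ extends to a $kN$-embedding $T \hookrightarrow I$, using that $kN$ is symmetric (hence $T$ is both the projective cover and the injective envelope of $L$, and $\soc^N T = L$). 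Letting $M$ be the finite-dimensional $kG$-submodule of $I$ generated by $T$, the injectivity of $T$ in $kN$-mod forces $M|_N \cong T \oplus R$ for some $R$, and $L \subseteq T$ is a $kG$-submodule of $M$ containing $\soc^N T$, so Lemma \ref{biglemma} applies.

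Next, I would carry out the Schreier-extension construction of \S2 to form the affine algebraic group $G^\diamond$, an extension of $G$ by the unipotent algebraic group $U := 1_T + J_L$, where $J_L \subseteq \End_N(T)$ is the nilpotent ideal annihilating $L$. As in Theorem \ref{maintheorem}, $T$ is naturally a rational $G^\diamond$-module. Since $G^\diamond$ has both a discrete and an infinitesimal part, I replace $\Dist(G^\diamond)$ with the measure algebra $M(G^\diamond) \cong \Dist(U) \otimes kG'$, where $G' \subset G^\diamond$ is the image of the scheme-theoretic section $\iota$ (so $G' \cong G$ as varieties and contains $N^\diamond$). Filtering by powers of $\rad kN$ (stable under the adjoint $G^\diamond$-action, since $N$ is normal in $G^\diamond$) yields, exactly as in Theorem \ref{maintheorem}, a short exact sequence
\[
0 \to \Dist^+(U) \otimes \gr_N kG' \to \gr_N M(G^\diamond) \to \gr_N kG \to 0,
\]
with $\gr_N M(G^\diamond)$ acting naturally on $\gr_N T$.

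The final step is to show that $\Dist^+(U)$, and hence the left-hand kernel, acts trivially on $\gr_N T$, so the action factors through $\gr_N kG$. For $u = 1 + j \in U$, the element $j \in J_L$ is a $kN$-linear endomorphism of $T$ that annihilates $L = T/(\rad kN)T$, whence $j(T) \subseteq (\rad kN)T$; using $kN$-linearity of $j$, one gets $j((\rad kN)^i T) = (\rad kN)^i j(T) \subseteq (\rad kN)^{i+1} T$ for every $i$. Therefore $u$ acts as the identity on each graded piece of $\gr_N T$, and $\Dist^+(U)$ acts as zero, giving the desired graded $\gr_N kG$-module structure extending the $\gr_N kN$-structure. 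For the moreover-clause, the degree-$0$ piece of $\gr_N T$ is $T/(\rad kN)T = L$, and since the positive-degree part of $\gr_N kG$ is nilpotent while $L$ is $kG$-irreducible, the head of $\gr_N T$ as a graded $\gr_N kG$-module is $L$.

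The main obstacle I expect is purely book-keeping: confirming that the measure-algebra formalism $M(G^\diamond) = \Dist(U) \otimes kG'$ and its $\rad kN$-filtration really do behave exactly as the distribution-algebra analog in Theorem \ref{maintheorem}, despite the mixed discrete/infinitesimal character of $G^\diamond$. The introductory footnote anticipates this, but one must check carefully that the multiplication across the $\Dist(U)$ and $kG'$ factors is compatible with the filtration and yields the claimed short exact sequence; once this is in place, the rest of the proof is a direct translation.
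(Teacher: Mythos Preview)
Your approach matches the paper's: it explicitly says the proof is ``a straightforward modification of the proof of Theorem~\ref{maintheorem}, replacing the distribution algebras there by group algebras,'' and leaves the details to the reader. Your use of the measure algebra $M(G^\diamond)$ to handle the mixed discrete/infinitesimal nature of $G^\diamond$ is a reasonable way to make this precise (and is exactly what the paper's footnote anticipates), and your verification of strong stability via Lemma~\ref{biglemma}, following the pattern of Theorem~\ref{firstFrobenius}, is what is implicitly required.

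One small gap to repair: in arguing that $j\in J_L$ satisfies $j(T)\subseteq(\rad kN)T$, you write that $j$ ``annihilates $L=T/(\rad kN)T$.'' But $J_L$ is defined in \S2 as the annihilator in $\End_N(T)$ of the \emph{submodule} $V=L=\soc^N T$, not of the head; killing the socle does not directly give $j(T)\subseteq(\rad kN)T$. The conclusion is nonetheless correct: $J_L$ is a nilpotent ideal in the local ring $\End_N(T)$, so any $j\in J_L$ lies in $\rad\End_N(T)$ and is non-surjective; since $j$ is $kN$-linear and the head $T/(\rad kN)T\cong L$ is simple, the induced endomorphism of the head must vanish, whence $j(T)\subseteq(\rad kN)T$. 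With this fix your filtration argument goes through, and the rest of the proof (including the head computation) is fine.
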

 
 We next establish the
 following very general result on groups $G$ and finite normal subgroups.  
 
 \begin{lem}\label{gradedfactorization} Let $X$ be a graded $\gr_NkG$-module, and let $Y$ be a $kG/N$-module. Give $X\otimes Y$ the natural grading it inherits from $X$. Thus,
 $(X\otimes Y)_n:=X_n\otimes Y$ for all $n$.  Then there is a natural $\gr_NkG$-module structure on $X\otimes Y$
 in which 
 \begin{equation}\label{welldefined}
 [gu_n]_n(x_m\otimes y)=[gu_n]_nx_m\otimes gy\in X_{m+n}\otimes Y,\end{equation}
 for each $g\in G$, $u_n\in (\rad kN)^n$, $x_m\in X_m, y\in Y$, $n\in\mathbb N, m\in\mathbb Z$.
 In particular, $\gr_NkN\subseteq \gr_NkG$ acts on $X\otimes Y$ through its action on $X$ only.
 \end{lem}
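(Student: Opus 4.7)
The plan is to exploit the fact that $Y$, being a $kG/N$-module, satisfies $ny=y$ for every $n\in N$, so that $gy$ depends only on the coset $gN$. This coset-invariance is the mechanism that makes the tensor action descend to the graded quotient $\gr_N kG$; everything else will be bookkeeping.

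I would begin by fixing a set $T\subseteq G$ of left coset representatives for $N$ in $G$. Since $N$ is normal in $G$, the subspace $(\rad kN)^n$ is $G$-conjugation stable, so $(\rad kN)^n\cdot kG = kG\cdot(\rad kN)^n$, and this ideal decomposes as $\bigoplus_{t\in T} t\cdot(\rad kN)^n$. Consequently each degree-$n$ element of $\gr_N kG$ has a unique representative of the form $\sum_{t} t\,u_{n,t}$ with $u_{n,t}\in(\rad kN)^n$, vanishing iff every $u_{n,t}\in(\rad kN)^{n+1}$. Using this canonical form, I would \emph{define} the action on $X\otimes Y$ by
$$\bigl[\textstyle\sum_{t} t\,u_{n,t}\bigr]_n(x_m\otimes y) \;:=\; \sum_{t}[t\,u_{n,t}]_n x_m\otimes ty.$$
Well-definedness is then immediate from the uniqueness of the $T$-decomposition, because $[t\,u_{n,t}]_n$ already annihilates $X$ whenever $u_{n,t}\in(\rad kN)^{n+1}$, by the given graded structure on $X$. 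To compare with the formula~(\ref{welldefined}) for a general $g\in G$, write $g=tn$ with $t\in T,n\in N$; then $gu_n=t(nu_n)$ with $nu_n\in(\rad kN)^n$, while $gy=t(ny)=ty$ by coset-invariance, so the two descriptions of the action coincide.

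The remaining module axioms are routine. The unit acts trivially, and associativity reduces, using normality of $N$, to the identity $(gu_m)(hv_n)=(gh)(h^{-1}u_m h)v_n$ on the algebra side, together with $g(hy)=(gh)y$ on the tensor side; these match term by term, and the fact that $(h^{-1}u_m h)v_n\in(\rad kN)^{m+n}$ guarantees the correct grade. The final assertion, that $\gr_N kN\subseteq \gr_N kG$ acts on $X\otimes Y$ only through the first tensor factor, follows by specializing $g=1$ in the formula. The only conceptual point is the coset-invariance of $gy$; once that is in hand, no genuine obstacle arises.
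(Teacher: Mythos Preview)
Your argument is correct. The key insight---that $gy$ depends only on the coset $gN$---is the same one the paper uses, but your packaging is different. The paper argues more abstractly: it observes that $\gr_N kG \cong kG\otimes_{kN}\gr_N kN$ (a dimension count), builds a map $\tau\circ\theta: kG\otimes\gr_N kN\otimes X\otimes Y\to X\otimes Y$ sending $g\otimes b\otimes x\otimes y\mapsto g(bx)\otimes gy$, and then checks that this map kills the relations $an\otimes b - a\otimes nb$ defining the tensor product over $kN$, so that it descends to an action of $\gr_N kG$. You instead fix coset representatives $T$, use the resulting direct-sum decomposition $(\rad kN)^n kG=\bigoplus_{t\in T} t\,(\rad kN)^n$ to write elements in canonical form, and define the action term by term.

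Your approach is more elementary and makes the formula~(\ref{welldefined}) and the associativity verification completely transparent; the paper's approach avoids any choice of representatives and packages well-definedness as a single kernel computation, which is tidier and perhaps more readily adaptable to the Clifford-system generalizations the authors allude to later. Both are short and neither has any real advantage over the other for the statement at hand.
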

 In the display, $gu_n\in (\rad kN)^ng\subseteq (\rad kN)^nG$ and $[gu_n]_n:= gu_n + (\rad kN)^{n+1}\in X_n$, $g\in G$.
 
 \begin{proof}We will  define the require action of $\gr_NkG$ on $X\otimes Y$ by means of a diagram

 \medskip\medskip 
 \begin{tikzpicture}[x=10pt,y=10pt,>=stealth,scale=1.25]
\draw
(-11,4) node {$kG\otimes {\rm gr}_N(kN) \otimes X \otimes Y$}
(0,4) node {$kG\otimes X \otimes Y$}
(8,4) node {$X \otimes Y$}
 
(-11,-4) node {${\rm gr}_N(kG) \otimes X \otimes Y$}
 
(-4.5,4.75) node {$\theta$}
(4.5,4.75) node {$\tau$}
 
(.5,-1) node {$\delta$}
(-11.5,0) node {$\gamma$}
;
\draw[thick,->] (-6,4) -- (-3,4);
\draw[thick,->] (3,4) -- (6,4);
 
\draw[thick,->] (-11,3) -- (-11,-3);
\draw[thick,->] (-11,3) -- (-11,-2.5);
 
\draw[thick,dashed,->] (-7,-3.5) -- (7,3);
\end{tikzpicture} 
 \medskip\medskip
  
 In this diagram, $\theta:kG\otimes\gr_NkN\otimes X\otimes Y\to kG\otimes X\otimes Y$ is defined by $a\otimes b\otimes x\otimes y
 \mapsto a\otimes bx\otimes y$, $a\in kG, b\in \gr_NkN$, $x\in X, y\in Y$, using the given action of $\gr_NkN$ on $X$. On the other hand,
 $\tau:kG\otimes X\otimes Y\to X\otimes Y$ is defined by $\tau(g\otimes x\otimes y):= gx\otimes gy$, $g\in G, x\in X, y\in Y$. 
 Finally, $\gamma$, a mapping of graded vector space, is defined by multiplication on each grade, regarding $kG$ in the top left as concentrated in grade 0 and using multiplication on $\gr_NkN$.  Also, there are maps  
 $$kG\otimes\gr_NkN\otimes X\otimes Y\overset{\beta}\longrightarrow kG\otimes_{kN}\gr_NkN\overset{\alpha}\longrightarrow 
 \gr_NkG,$$ 
 where $\gr_NkN$ is regarded as a $kN$-module, grade by grade. Thus, $\gamma=\gamma'\otimes 1_X\otimes 1_Y$, if $\gamma':=\beta\circ\alpha$. But $\dim kG\otimes_{kN}\gr_NkN
 =\dim \gr_NkG$. Thus, $\alpha$ is an isomorphism. Hence, $\Ker(\gamma)=\Ker(b)$. But $\Ker(\beta)$
 is the $k$-space generated by expressions $an\otimes b- a\otimes nb$. By definition, these
 elements are killed by $\theta\circ\theta$, so there exists a unique $\delta$ making the above
 diagram commute. In this way, $X\otimes Y$ becomes a $\gr_NkG$-module as required.
 \end{proof}

Continue to assume the hypotheses and notation of Theorem \ref{Generalizationtofinitegroups}. Let
$\sC=\sC(L)$ the full abelian subcategory of $kG$-mod consisting of finite dimensional $kG$-modules $M$ with the
property that the restriction $M_N$ of $M$ to $N$ is a direct sum of copies of $L$. The following result
is a special case of \cite[Thm. 3.11]{Cline}; a similar idea in the context of algebraic groups was noted
in \cite{CPS1} and was discovered independently by Jantzen \cite[2.2(1)]{Jan0}; see also \cite[I.6.15(2)]{Jan}. For the convenience of the reader, we include a proof.

\begin{lem}\label{Cline} Assume that $L$ is absolutely\footnote{This assumption is largely
a convenience. Without this assumption, it is still true that, if $E:=\End_{kN}(L)^{\text{\rm op}}$, then 
the functor $M\mapsto \Hom_{kN}(L,M_N)$ gives an equivalence of $\sC$ with $E(G/N)$-mod, where the
algebra $E(G/N)$ is a ``twisted"
group ring (via the natural action of $G/N$ on $E$. The twisting is through the natural action of
$G/N$ on $E$.  In fact, an inverse equivalence is given by $X\mapsto L\otimes_EX$; see Cline's work
\cite[Thm. 3.11]{Cline}, which gives even more general versions.} irreducible. The category $\sC$
is equivalent to $kG/N$-mod by means of the functor $M\mapsto \Hom_N(L,M_N)$. In addition,
$M\cong L\otimes \Hom_N(L,M_N)$ in $kG$-mod.
\end{lem}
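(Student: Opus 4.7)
The plan is to construct a quasi-inverse functor $X \mapsto L \otimes_k X$, where $X$ is a $kG/N$-module inflated to a $kG$-module and $G$ acts diagonally, and to show that the natural evaluation map
$$\mu_M : L \otimes_k \Hom_N(L, M_N) \longrightarrow M, \qquad l \otimes \phi \longmapsto \phi(l),$$
is a $kG$-module isomorphism for every $M \in \sC$. This will simultaneously yield both assertions of the lemma, the equivalence of categories and the explicit $kG$-module isomorphism.

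First I would equip $\Hom_N(L, M_N)$ with a $kG$-action by $(g \cdot \phi)(l) := g\phi(g^{-1}l)$ for $g \in G$, $l \in L$, $\phi \in \Hom_N(L, M_N)$. The $kN$-linearity of $g \cdot \phi$ uses the normality of $N$: for $n \in N$ one computes $(g\phi)(nl) = g\phi\bigl((g^{-1}ng) g^{-1}l\bigr) = g (g^{-1}ng)\phi(g^{-1}l) = n(g\phi)(l)$, where the middle equality uses $g^{-1}ng \in N$ and the $kN$-linearity of $\phi$. A similar direct check shows $N$ itself acts trivially, so $\Hom_N(L, M_N)$ is indeed a $kG/N$-module. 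Conversely, for any $kG/N$-module $X$, triviality of the $N$-action on $X$ forces $(L \otimes_k X)_N \cong L^{\oplus \dim X}$ as $kN$-modules, so $L \otimes_k X$ lies in $\sC$.

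Next I would verify $G$-equivariance of $\mu_M$ directly: $\mu_M\bigl(g(l \otimes \phi)\bigr) = (g\phi)(gl) = g\phi(g^{-1}gl) = g\,\mu_M(l \otimes \phi)$. That $\mu_M$ is bijective follows from absolute irreducibility: Schur's lemma gives $\End_{kN}(L) = k$, so a $kN$-isomorphism $M_N \cong L^{\oplus n}$ yields $\Hom_N(L, M_N) \cong k^n$, and the induced map $L \otimes_k k^n \to L^{\oplus n}$ sending $l \otimes e_i$ to the $i$th copy of $l$ is visibly an isomorphism. Hence $\mu_M$ is the required $kG$-module isomorphism, settling the second assertion.

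To close the equivalence it remains to produce a natural $kG/N$-module isomorphism $\Hom_N(L, L \otimes_k X) \cong X$ for each $kG/N$-module $X$. Since $N$ acts trivially on $X$, one has $\Hom_N(L, L \otimes_k X) \cong \Hom_N(L, L) \otimes_k X \cong k \otimes_k X = X$, using absolute irreducibility once more. The $G$-equivariance of this isomorphism, together with functoriality of $\mu_M$, will complete the proof. The main point of care I anticipate is tracking the diagonal-versus-trivial $G$-actions cleanly through the Schur collapse $\End_{kN}(L) = k$, but the normality of $N$ and triviality of $N$ on $X$ should make the bookkeeping go through without incident.
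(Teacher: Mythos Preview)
Your proposal is correct and follows essentially the same approach as the paper: both define the same $G$-action on $\Hom_N(L,M_N)$, the same evaluation map $L\otimes\Hom_N(L,M_N)\to M$, and verify it is a $kG$-isomorphism. The only minor difference is in establishing bijectivity: the paper checks it first for irreducible $M$ (surjective plus dimension count) and then invokes exactness of $\Hom_N(L,-)$ on $\sC$ to extend to arbitrary $M$, whereas you use the $kN$-decomposition $M_N\cong L^{\oplus n}$ and Schur's lemma directly. Both are routine; your version is slightly more explicit, and you also spell out the other natural isomorphism $\Hom_N(L,L\otimes X)\cong X$, which the paper leaves implicit.
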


\begin{proof} First, if $f\in\Hom_N(L_N,M_N)$ and $g\in G$, put $(g\cdot f)(x):= g\cdot f(g^{-1}\cdot x)$
for all $x\in L$. This defines an action of $G$ on $\Hom_N(L_N,M_N)$, and since $g\cdot f=f$
for all $n\in N$, $\Hom_N(L_N,M_N)$ is a $kG$-module. Thus, $\Hom_N(L_N,-):\sC(N)\to kG/N$-mod
is an exact additive functor. There is a natural $\sC(N)$-homomorphism $\Phi(M):L\otimes \Hom_N(L_N,M_N)
\to M$, $v\otimes f\mapsto f(v)$. If $M$ is irreducible, then $\Phi(M)$ is surjective, and hence an
isomorphism by dimension considerations. Now the exactness of $\Hom_N(L_N,-)$ implies easily
that $\Phi(M)$ is an isomorphism generally. Clearly, $\Phi$ provides an inverse to $\Hom_N(L_N,-)$.
\end{proof}

We can now establish the following result.

\begin{thm}\label{mainfinitetheorem}Assume the hypotheses and notation of Theorem \ref{Generalizationtofinitegroups}. Let $S$ be an irreducible $kG/N$-module with
projective cover $\widetilde Q$ in $kG/N$-mod.  Let $Q$ be the projective cover of the $kG$-module $L\otimes S$. Then $(\gr_NT)\otimes \widetilde Q$ becomes a naturally graded $\gr_NkG$-module, and, as such, is
isomorphic to $\gr_NQ$. \end{thm}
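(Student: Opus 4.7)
The plan is a two-step reduction. First, I identify the degree-zero piece by proving $Q/\rad(kN)Q \cong L \otimes \widetilde{Q}$ as $kG$-modules, with the corollary $Q|_N \cong T^{\dim \widetilde{Q}}$ as $kN$-modules. Second, I upgrade this to a graded $\gr_N kG$-module isomorphism via projective lifting and graded Nakayama.

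For Step 1, set $Q_0 := Q/\rad(kN)Q$. Every simple $kN$-quotient $L'$ of $Q$ must be isomorphic to $L$: Frobenius reciprocity yields a nonzero $kG$-map $Q \to \operatorname{Ind}_N^G L'$ whose image has $L \otimes S$ (the unique simple $kG$-quotient of $Q$) as a simple quotient, so $L \otimes S$ is a subquotient of $\operatorname{Ind}_N^G L'$; restricting to $N$ via Mackey and using the $G$-stability of $L$ (valid because $L$ extends to $kG$) forces $L' \cong L$. Thus $Q_0|_N$ is a direct sum of copies of $L$ and $Q_0 \in \sC$. Because $\rad(kN)$ annihilates every object of $\sC$, precomposition with $Q \twoheadrightarrow Q_0$ gives a natural isomorphism $\Hom_{kG}(Q_0, M) \cong \Hom_{kG}(Q, M)$ for $M \in \sC$, which is exact in $M$ by projectivity of $Q$; hence $Q_0$ is projective in $\sC$. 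The containment $\rad(kN) \cdot kG \subseteq \rad(kG)$ (a $G$-stable nilpotent two-sided ideal) implies $Q_0$ inherits the unique maximal submodule $\rad(Q)/\rad(kN)Q$ from $Q$, making $Q_0$ indecomposable with head $L \otimes S$. The equivalence of Lemma \ref{Cline} then identifies $Q_0$ with the $\sC$-projective cover $L \otimes \widetilde{Q}$ of $L \otimes S$. Restricting to $N$ and using that $Q|_N$ is projective (as $kG$ is $kN$-free) yields $Q|_N \cong T^{\dim \widetilde{Q}}$.

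For Step 2, $\gr_N Q$ is projective as a graded $\gr_N kG$-module (since $Q$ is a direct summand of $(kG)^n$ and $\gr_N$ commutes with finite direct sums). The degree-zero projection $\pi \colon \gr_N T \otimes \widetilde{Q} \twoheadrightarrow L \otimes \widetilde{Q}$ is a $\gr_N kG$-surjection (via the Lemma \ref{gradedfactorization} action), and Step 1 supplies a canonical $\gr_N kG$-surjection $f \colon \gr_N Q \twoheadrightarrow L \otimes \widetilde{Q}$ coming from $(\gr_N Q)_0 \cong L \otimes \widetilde{Q}$. Projectivity of $\gr_N Q$ lifts $f$ through $\pi$ to a graded $\gr_N kG$-map $\tilde{f} \colon \gr_N Q \to \gr_N T \otimes \widetilde{Q}$ which is forced to be an isomorphism in degree zero. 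Since $T$ is cyclic over $kN$ by Nakayama, $\gr_N T$ is generated over $\gr_N kN$ by $(\gr_N T)_0 = L$; hence $\gr_N T \otimes \widetilde{Q}$ is generated over $\gr_N kG$ by its degree-zero part $L \otimes \widetilde{Q}$, and graded Nakayama forces $\tilde{f}$ to be surjective. Finally, the dimension count $\dim_k(\gr_N Q)_i = \dim_k(\gr_N T)_i \cdot \dim \widetilde{Q} = \dim_k(\gr_N T \otimes \widetilde{Q})_i$, supplied by $Q|_N \cong T^{\dim \widetilde{Q}}$, promotes $\tilde{f}$ to an isomorphism. The main obstacle is Step 1, and specifically the identification $Q_0 \cong L \otimes \widetilde{Q}$, which combines the projectivity of $Q_0$ in $\sC$ (via the $\Hom$-cancellation above) with the indecomposability coming from $\rad(kN) \cdot kG \subseteq \rad(kG)$; once Step 1 is in hand, Step 2 is routine graded homological bookkeeping.
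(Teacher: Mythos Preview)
Your proof is correct and follows essentially the same route as the paper: Step~1 (showing $Q/(\rad kN)Q\cong L\otimes\wQ$ via the category $\sC$ and Lemma~\ref{Cline}) is identical to the paper's argument, and Step~2 is the same projectivity-plus-dimension strategy, with only cosmetic differences---the paper obtains the surjection $\gr_NQ\twoheadrightarrow \gr_NT\otimes\wQ$ by matching $\gr_NkG$-heads, and reads off the dimension equality from both modules being $\gr_NkN$-projective with the same $\gr_NkN$-head, whereas you lift through the degree-zero piece and use $Q|_N\cong T^{\dim\wQ}$ for the dimension count.
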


\begin{proof} First, $M:=Q/(\rad kN)Q$ is in $\sC$. To see this, suppose $L'$ is a $kN$-irreducible
summand of the completely reducible $kN$-module $Q$. Then $\Hom_{kG}(Q, \ind_N^GL')\not=0$. Thus, the
irreducible $kG$-module
$L\otimes S$ appears as a composition factor of $\ind_N^GL'$. Restricting back  to $N$, it follows that
$L$ is a composition factor of a ``twist" $L^{\prime g}$ of $L'$ by some $g\in G$.  But $L$ is $G$-stable, so that $L'\cong L$ in $kN$-mod, and hence 
$M\in \sC$. 

Next, it follows directly (from the projectivity of $Q$) that $M$ is  projective in $\sC$.  Of course, $M\not=0$
since $\rad kN$ is nilpotent. (Indeed, $(\rad kN)kG$ is a nilpotent ideal in $kG$, as is $\rad(\gr_NkN)
\gr_NkG$ in $\gr_NkG$---this will be useful below---and so contained in
$\rad kG$.) Also, $M$ is a $G$-homomorphic image of $Q$, which has $kG$-head
$L\otimes S$. Thus, $M$ has $kG$-head $L\otimes S$, and so is indecomposable in both $kG$-mod
and in $\sC$. Indeed, $M$ is the projective cover in $\sC$ of $L\otimes S$.
  By Lemma 4.3, $M\cong L\otimes \wQ$
in $\sC$ (and also in $kG$-mod). 
 
The $kN$-head of $Q$ (which is $M$ by construction) is isomorphic to the $\gr_NkN$-head of $\gr_NQ$ as a 
$kG/(\rad kN)kG\cong \gr_NkG/\rad\gr_NkN(\gr_NkG)$-module. (To see this isomorphism, observe that it holds
when $Q$ is replaced
by $kG$.)
That is, $M\cong L\otimes\wQ$ is the $\gr_NkN$-head of $\gr_NQ$, the latter module a projective $\gr_NkG$-module. 
Applying the last assertion in Lemma 4.2,  $L\otimes \wQ$ is also the $\gr_NkN$-head  of $\gr_NT\otimes \wQ$. In particular, the $\gr_{N}(kG)$-head of $\gr_NT\otimes \wQ$ is the same as that of $\gr_NQ$. Therefore, there is a surjection
$\pi:\gr_NQ\twoheadrightarrow \gr_NT\otimes \wQ$ in $\gr_NkG$-mod. Also, $\gr_NQ$ is a projective $\gr_NkN$-module.

By Lemma 4.2 again and construction,  $\gr_NT\otimes\wQ$ is projective in the
category of $\gr_NkN$-modules. Also, $\gr_NT\otimes \wQ$ and $\gr_NQ$ have the
same $\gr_NkN$-head. Hence, they are isomorphic as $\gr_NkN$-modules, and, in particular, they
have the same dimension. Therefore, the surjection from $\pi$ above is an isomorphism of $\gr_NkG$-modules, as required. \end{proof}

The following corollary generalizes \cite[Theorem]{ACS}.

\begin{cor}\label{ACScor} Let $N$ be a normal subgroup of a finite group $G$. Fix a field $k$ and let $L$  be an absolutely
irreducible $kN$-module which extends to the group $G$. (For example, $L$ could be the trivial module $k$.) Then there exists a
finite dimensional $kG$-module $M$ with the following property:

 Given any irreducible $kG/N$-module 
$S$, let $Q$ be the projective cover in $kG$-mod of $L\otimes S$, and let $\wQ$ be the projective cover 
in $kG/N$-mod of $S$.  Then $Q$ and $M\otimes\wQ$ have the same $G$-composition factors (counting
multiplicities).  In fact, $M$ can be chosen so that $M_N$ is completely reducible, and there
is a $kG$-module isomorphism
$$\bigoplus_{n\geq 0}\frac{(\rad kN)^nQ}{(\rad kN)^{n+1}Q}\cong M\otimes \wQ.$$
  \end{cor}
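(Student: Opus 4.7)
The plan is to take $M:=\gr_N T$, where $T=T(L)$ is the projective $kN$-cover of $L$ appearing in Theorem \ref{Generalizationtofinitegroups}, and deduce the corollary directly from Theorem \ref{mainfinitetheorem}. Observe that this $M$ depends only on $L$ and not on the choice of irreducible $kG/N$-module $S$, and that it is finite dimensional since $T$ is.

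First I would make precise the $kG$-module structure on $M$. By Theorem \ref{Generalizationtofinitegroups}, $\gr_N T$ is a graded $\gr_N kG$-module; restricting along the inclusion of the degree-zero subalgebra $kG/(\rad kN)kG \cong (\gr_N kG)_0 \hookrightarrow \gr_N kG$ equips $M$ with a $kG$-module structure in which each graded component is a $kG$-submodule. Since $\rad kN$ maps to zero in $(\gr_N kG)_0$, it annihilates $M$, so $M_N$ is a direct sum of irreducible $kN$-modules, as required.

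Next I would invoke Theorem \ref{mainfinitetheorem} to obtain a graded $\gr_N kG$-module isomorphism $\gr_N T \otimes \wQ \cong \gr_N Q$. Using formula (\ref{welldefined}) in Lemma \ref{gradedfactorization}, the degree-zero action of $g \in G$ on $\gr_N T \otimes \wQ$ is the diagonal one, $g(x\otimes y)=gx\otimes gy$, that is, the standard $kG$-action on $M\otimes \wQ$. On the other side, because $N$ is normal in $G$ the ideal $\rad kN$ is $G$-stable, so each $(\rad kN)^nQ$ is a $kG$-submodule of $Q$ and the degree-zero action on $\gr_N Q$ agrees with the inherited $kG$-action on each subquotient. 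Restricting the isomorphism of Theorem \ref{mainfinitetheorem} to the degree-zero subalgebra therefore yields the required $kG$-module isomorphism
$$\bigoplus_{n\geq 0}\frac{(\rad kN)^nQ}{(\rad kN)^{n+1}Q} \cong M\otimes \wQ.$$

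For the composition-factor assertion, the chain $Q \supseteq (\rad kN)Q \supseteq (\rad kN)^2 Q \supseteq \cdots$ is a $G$-stable filtration of $Q$, so the $kG$-composition factors of $Q$, with multiplicities, agree with those of $\gr_N Q$; combined with the displayed isomorphism, $Q$ and $M \otimes \wQ$ have the same $kG$-composition factors. The substantive work already lies in Theorem \ref{mainfinitetheorem}, and the only step that requires any care is the bookkeeping verifying that the degree-zero piece of the graded $\gr_N kG$-action on either side matches the intended $kG$-module structure; this matching is immediate from Lemma \ref{gradedfactorization}. Hence the main obstacle is essentially notational rather than mathematical.
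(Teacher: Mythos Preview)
Your proposal is correct and follows essentially the same approach as the paper: take $M=\gr_NT$ viewed as a $kG$-module via the grade-zero algebra $(\gr_NkG)_0\cong kG/(\rad kN)kG$, and read off the conclusion from Theorem~\ref{mainfinitetheorem}. Your write-up is in fact more explicit than the paper's terse proof, spelling out why $M_N$ is completely reducible and why the degree-zero actions match on both sides of the isomorphism.
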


\begin{proof} We apply Theorem \ref{mainfinitetheorem}.  Since $\gr_NT$
is a $\gr_NkG$-module, each 
section $(\gr_NT)_n$ can be be naturally regarded as a $kG$-module though the action of
$(\gr_NkG)_0=kG/(\rad kN)kG$. Let  $M:=\bigoplus_n(\gr_NT)_n$.\end{proof}

\begin{rem}\label{lastremark}(a) As the proof of Corollary \ref{ACScor} shows, $M$ can be chosen so that $M_N$
is completely reducible, and isomorphic to the direct sum (with multiplicities) of all composition factors
of the projective cover $T$ in $kN$-mod of $L$.

(b) In case $L=k$, all of the additional properties (beyond the statement of \cite[Theorem 1]{ACS}) can be
deduced by slightly extending the proof given in \cite{ACS}. There $M$ is taken to be a direct sum of
the $kG$-modules $J^n/J^nA$, where $J=\rad kN$ and $A$ is the augmentation ideal of $kN$. The
action of $G$ is by conjugation. Observe that this conjugation action of $G$ becomes just the left
action, upon restriction to $N$. Also, note that $J^n/J^nA$ is a quotient of $J^n/J^{n+1}$ as a 
$(kN,kN)$-bimodule since $A\supseteq J$. Moreover, the quotient map $J^n/J^{n+1}\twoheadrightarrow
J^n/J^nA$ is split as a bimodule map, since $J^n/J^{n+1}A=(J^n/J^{n+1})A$ is, as a right $kN$-module
the sum of all non-trivial $kN$-irreducible right submodules of $J^n/J^{n+1}$. Now write $1=\sum_{i\geq 0} e_i$, as
a sum of primitive orthogonal idempotents in $kN$. Assume $e=e_0$ is such that $kNe$ is the PIM of
the trivial module $k$. Equivalently, $e_0\in A$. Thus, $ekN$ is the PIM of the right trivial module $k$. 
Taken with the discussion above, this gives that a $(kN,kN)$-bimodule decomposition $J^n/J^{n+1}
= (J^n/J^{n+1}e\oplus J^n/J^nA$. In particular, there are left $kN$-module isomorphisms 
$$J^n/J^nA\cong (J^n/J^{n+1})e\cong J^ne/J^{n+1}e\cong J^n(kNe)/J^{n+1}(kNe).$$
This proves the additional properties of $M$ in Corollary \ref{ACScor} from the point of view of \cite{ACS}. The
proof in \cite{ACS} does give an isomorphism $J^nQ/J^{n+1}Q\cong (J^n/J^{n+1}A)\otimes \wQ$ as
$kG$-modules. But it does not identify $J^n/J^{n+1}A$ in terms of $kNe\cong T$.

However, we also note, the method of \cite{ACS}, with the identification $J^n/J^nA\cong(\gr_NT)_n$
in hand, gives another way to prove Theorem \ref{mainfinitetheorem} in the special case $L=k$ (as well
as Theorem 3.1 for PIMs with trivial head).

(c) Part (a) above implies that the character of the $kN$-module $T$ is the restriction of the character of
a $kG$-module. That is, the class $[T]$ in the Grothendieck group of $kN$-modules is the image
under restriction of the class $[M]$ (in the Grothendieck group of $kG$) of a $kG$-module $M$. A similar
conclusion holds in the context of Theorem 3.1 and is already already noted in Donkin \cite{Donkin1} as a main result (see Corollary, p. 149). Using Donkin's result as a starting point, character factorizations similar
to \cite[Theorem]{ACS} can be proved, using for $N$ the infinitesimal kernel of a power $F^r$ of
the Frobenius morphism, and for $G$ the infinitesimal kernel of a higher power $F^s$. We do not pursue
this further, leaving details to the interested reader. But we point out that there is a close connection
between the work, at the character level, of \cite{ACS} and \cite{Donkin1} as well as with the present paper.

(d) As suggested already, above Lemma 4.3, there are connections of this paper with the theory of
Clifford systems, as defined by Dade \cite[\S1]{Dade} and used in \cite{Cline}. Further ties are suggested by the following observation:
The algebras $\gr_NkG$ in this section are all Clifford systems over the group $G/N$. (This appears
to be a new observation.)  It seems
likely that further generalizations of \cite[Theorem]{ACS} might arise by applying the results
and methods of \cite{Cline} to this Clifford system. We do not pursue this, but leave it to the interested
reader.  

\end{rem}

 \section{The general case} In this section, let $G$ be a connected, affine algebraic group over a fixed
 algebraically closed field $k$, possibly of characteristic 0.   Recall that Theorem \ref{maintheorem} concerned strongly stable pairs $(Q,V)$ for a normal, infinitesimal subgroup
 of $G$. This section presents a considerable generalization of this result, in which the subgroup $N$
 is not assumed to be infinitesimal.  
 
 The connected assumption on $G$ is largely a convenience allowing us to quote certain results in \cite[\S3]{PS1}; in
 particular, it is not needed in the following two lemmas.

\begin{lem}\label{lemma1} Suppose that $G\to A^\times$ is a homomorphism of affine algebraic groups,
where $A^\times$ is the group of units in a finite dimensional $k$-algebra $A$. Then there is a natural
homomorphism $\Dist(G)\to A$ of $k$-algebras, such that, for any $A$-module $M$, the action of $G$ on $M$
through $A^\times$ induces the action of $\Dist(G)$ on $M$ through $A$-multiplication.\end{lem}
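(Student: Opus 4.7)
The plan is to factor the sought-after algebra homomorphism $\Dist(G)\to A$ as a composition
$$\Dist(G)\longrightarrow \Dist(A^\times)\overset{\tilde\phi}{\longrightarrow} A,$$
where the first map is the canonical one induced functorially by the given algebraic group homomorphism $G\to A^\times$, and $\tilde\phi$ is a naturally defined algebra homomorphism built purely from the structure of $A$. The compatibility with $A$-module actions will then reduce to a parallel functorial computation.

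To construct $\tilde\phi$, I would exploit the left regular representation: left multiplication by units defines an algebraic group homomorphism $A^\times \to GL_k(A)$, and passing to distributions yields an algebra map $\phi\colon \Dist(A^\times)\to \End_k(A)$. The key observation is that, for each $a\in A$, the right multiplication operator $R_a\colon A\to A$ is $A^\times$-equivariant for the left-multiplication action (by associativity, $R_a(gx)=gxa=gR_a(x)$), so by functoriality of $\Dist$ it is also $\Dist(A^\times)$-equivariant. Hence $\phi(\Dist(A^\times))$ lies in the centralizer of the set of right multiplications $\{R_a:a\in A\}$ in $\End_k(A)$, and this centralizer is precisely the image of $A$ in $\End_k(A)$ via left multiplication. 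The induced map $\tilde\phi\colon \Dist(A^\times)\to A$ is then characterized by $\tilde\phi(\delta)=\phi(\delta)(1_A)$ and is an algebra homomorphism (the algebra map property transports directly across the identification $A\overset{\sim}\to L_A\subseteq \End_k(A)$).

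For the compatibility statement, given any $A$-module $M$, the $A^\times$-action on $M$ is obtained by restricting the $A$-action along $A^\times\hookrightarrow A$, and for each $m\in M$ the map $R_m\colon A\to M$, $a\mapsto a\cdot m$, is $A^\times$-equivariant by associativity, hence $\Dist(A^\times)$-equivariant. Evaluating at $1_A$ yields $\delta\cdot m = R_m(\delta\cdot 1_A)=\tilde\phi(\delta)\cdot m$ for all $\delta\in\Dist(A^\times)$, which, precomposed with $\Dist(G)\to \Dist(A^\times)$, gives precisely the asserted factorization of the $G$-action on $M$ through $A$-multiplication.

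I expect the only non-routine step to be the centralizer identification showing that $\phi(\Dist(A^\times))\subseteq A\subset \End_k(A)$; once this is in hand, all remaining verifications are formal consequences of the functoriality of $\Dist$ applied to $A^\times$-equivariant morphisms of rational $A^\times$-modules, as set up in \cite[Chs.\ 7--8]{Jan}. The argument makes no use of connectedness or any hypothesis on the characteristic of $k$.
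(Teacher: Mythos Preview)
Your proposal is correct and follows essentially the same route as the paper: both arguments use the left regular representation of $A^\times$ on $A$, observe that right multiplications $R_a$ are $A^\times$-equivariant and hence commute with the induced $\Dist$-action, and then invoke the centralizer identification $\End_{A_R}(A)\cong A$ (via left multiplication) to land the image of $\Dist$ in $A$. The only cosmetic differences are that you factor through $\Dist(A^\times)$ first (the paper works directly with $\Dist(G)$), and for the compatibility with a general $A$-module $M$ you use the $A^\times$-equivariance of the map $a\mapsto a\cdot m$, whereas the paper instead notes that every $A$-module is a subquotient of a free module; both deductions are immediate.
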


\begin{proof}
Let $M={_AA}$ be the (left) regular $A$-module. Then $M$ is a rational $G$-module by means of the
homomorphism $G\to A^\times$, and so it is a $\Dist(G)$-module by means of an algebra homomorphism
$\Dist(\rho):\Dist(G)\to\End(M)$. For $a\in A$, right multiplication operator $a_R$ on $M$ commutes with the action of $A$, and
hence of $G$, on $M$. Thus, $a_R$ commutes with the action of $\Dist(G)$ on $M$. Since $A\cong \End_A({A_A})\cong A$, $\Dist(\rho)$ can be viewed as an algebra homomorphism $\Dist(G)\to A$. With this identification,
the action of $\Dist(G)$ on $M$ is induced through a natural algebra homomorphism $\Dist(G)\to A$.

Similarly,  $A$-multiplication and the map $\Dist(G)\to A$ give the action induced by $G$
on any finite direct sum $A^{\oplus n}$ of copies of $A$, or any of its submodules, or quotient modules
(for the action of $A$). The lemma now follows.\end{proof}

\begin{lem}\label{lemma2} Let $G$ be an affine algebraic group over $k$. Let $M$ be any vector
subspace of finite codimension in $\Dist(G)$. Then $M$ contains a two-sided ideal $I$ of $\Dist(G)$
of finite codimension in $\Dist(G)$. \end{lem}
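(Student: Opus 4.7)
The plan is to use Lemma \ref{lemma1} in the following way. Set $V := \Dist(G)/M$, a finite-dimensional $k$-vector space of dimension $d := \operatorname{codim} M$, and let $\pi\colon \Dist(G)\to V$ be the quotient map. Dually, the annihilator $M^\perp \subseteq \Dist(G)^*$ is also $d$-dimensional. I would construct a finite-dimensional rational $G$-module $W$ such that the algebra homomorphism $\Dist(\rho)\colon \Dist(G)\to \End_k(W)$ produced by Lemma \ref{lemma1} from the representation $\rho\colon G \to GL(W)$ has kernel $I$ contained in $M$. Because $\End_k(W)$ is finite-dimensional, the kernel $I$ is automatically a two-sided ideal of finite codimension in $\Dist(G)$, which gives the conclusion.

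To produce such a $W$, I would fix a basis $\ell_1,\dots,\ell_d$ of $M^\perp$, so that $M=\bigcap_{i=1}^d \ker\ell_i$, and realize each $\ell_i$ separately as a matrix coefficient of some finite-dimensional rational $G$-module $W_i$. That is, I would seek $v_i \in W_i$ and $\varphi_i \in W_i^*$ such that $\ell_i(x) = \varphi_i\bigl(\Dist(\rho_i)(x)\cdot v_i\bigr)$ for all $x \in \Dist(G)$, where $\rho_i\colon G\to GL(W_i)$ is the associated representation. Setting $W := \bigoplus_{i=1}^d W_i$, the kernel of the induced map $\Dist(G)\to \End_k(W)$ is then contained in $\bigcap_i \ker\ell_i = M$, since each $\ell_i$ factors through the finite-dimensional algebra $\End_k(W)$.

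The hard part will be realizing a given $\ell \in \Dist(G)^*$ as a matrix coefficient of a finite-dimensional rational $G$-module, or equivalently showing that $\ell$ lies in the image of the canonical injection $k[G]\hookrightarrow \Dist(G)^*$ defined by $f\mapsto (x\mapsto x(f))$. I would approach this via the Hopf algebra structure on $\Dist(G)$: the coproduct $\Delta$ endows $\Dist(G)^*$ with two commuting $\Dist(G)$-actions (dualizing left and right multiplication), and the local finiteness of the corresponding $G\times G$-action on $\Dist(G)$ dualizes to place $\ell$ inside a finite-dimensional $G\times G$-subrepresentation of $\Dist(G)^*$ contained in $k[G]$. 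The connectedness of $G$ is used here to identify the ``locally finite'' part of $\Dist(G)^*$ with the span of matrix coefficients of rational $G$-modules. That finite-dimensional submodule then supplies the rational $G$-module $W_i$ realizing $\ell_i$, and the remaining assembly into $W$ and application of Lemma \ref{lemma1} are formal.
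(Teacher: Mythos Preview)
Your identified ``hard part'' is where the argument genuinely breaks down: it is not true that an arbitrary $\ell \in \Dist(G)^*$ lies in the image of $k[G] \hookrightarrow \Dist(G)^*$, nor that it lies in any finite-dimensional $\Dist(G)$-stable (or $G\times G$-stable) subspace. For a concrete obstruction, take $G = \mathbb{G}_a$ in characteristic zero, so that $\Dist(G) \cong k[\partial]$ and $\Dist(G)^* \cong k[[s]]$ via $\ell \mapsto \sum_n \ell(\partial^n)\, s^n$; the image of $k[G] = k[t]$ is exactly the polynomial part, and an element of $k[[s]]$ lies in a finite-dimensional $k[\partial]$-submodule if and only if it is a rational function. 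Choosing any $\ell$ whose associated series is transcendental (for instance $\ell(\partial^n) = 1/n!$, corresponding to $e^s$) gives a codimension-one $M = \ker\ell$ for which no module $W_i$ as you describe can exist. Your proposed justification also fails at an earlier stage: there is no rational $G \times G$-action on $\Dist(G)$ by translation---distributions are supported at the identity, and left or right translation by $g \neq 1$ carries them out of $\Dist(G)$ entirely---so there is no local finiteness to dualize.

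The paper takes a different route that avoids this trap. Rather than working in $\Dist(G)^*$, it defines $F$ to be the annihilator of $M$ inside $k[G]$ itself, shows $F$ is finite dimensional (using that $\bigcap_n \mathfrak{m}_1^n = 0$, so distributions separate points of $k[G]$), and then exploits the genuine locally finite rational $G \times G$-module structure on $k[G]$ (by two-sided translation) to enlarge $F$ to a finite-dimensional $G \times G$-stable subspace $\widehat F$. A short Hopf-algebra computation with the comultiplication on $k[G]$ then shows that $I := \{x \in \Dist(G) : x(\widehat F) = 0\}$ is a two-sided ideal, of finite codimension since $\Dist(G)/I$ embeds in $\widehat F^{\,*}$. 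So your instinct to use a rational $G \times G$-structure is the right one, but it must be placed on $k[G]$, where such a structure actually exists, rather than on $\Dist(G)^*$.
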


\begin{proof}Let $F\subseteq k[G]$ be the annihilator of $M$. Then the space of linear maps $F\to k$
which are restrictions from $k[G]$ to $F$ of linear functionals in $\Dist(G)$ is finite dimensional. Hence, $F$ is finite dimensional,
since the intersections of all powers of the maximal ideal at 1 of $k[G]$ is $0$.

Since $k[G]$ is a rational $G\times G$-modules (as induced by the action $\{G\times G\}\times G\to G$, $(g_1,g_2,g)\mapsto
g_1gg_2^{-1}$, there is a $G\times G$-stable submodule $\widehat F$ containing $F$ as a subspace, and
which is finite dimensional. Let $I$ be the annihilator of $\widehat F$ in $\Dist(G)$. If $x\in I$ and $y\in
\Dist(G)$, and $f\in\widehat F\subseteq k[G]$, then
$$\begin{cases} (xy)(f)=\sum x(f_1)y(f_2)=0,\\
 (yx)(f)=\sum y(f_1)x(f_2)=0,
\end{cases}$$
where $f\mapsto \sum f_1\otimes f_2$, in Sweedler notation, under the comultiplication $k[G]
\to k[G]\otimes  k[G]$. Thus, $I$ is a two-sided ideal, and the lemma follows.
\end{proof}

 Let $N$ be a closed, normal subgroup scheme of $G$. For simplicity, we assume that $N$ is
 connected. Let $(Q,V)$ a strongly $G$-stable pair as 
 defined in \S2.

\begin{thm}\label{footnotetheorem}Let $(Q,V)$ be a strongly $G$-stable pair as above. Let $B'=\Dist(G)/I'$ be any
finite dimensional quotient algebra of $\Dist(G)$ by an ideal $I'$. There exists a finite dimensional quotient algebra $B=\Dist(G)/I$ with $I\subseteq I'$ having the following properties:

(a) If $\fa$ denotes the image of $\Dist(N)\subseteq\Dist(G)$ in $B$, then the action of $\Dist(N)$ on
$Q$ factors through an action of $\fa$ on $Q$. Moreover, the action of $\gr_\fa \fa$ on $\gr_\fa Q$ extends 
to an action of $\gr_\fa B$ on $Q$.

(b) In addition, the actions in (a) can be chosen so that 
 $$\gr^\#_\fa V:=\bigoplus_{n\geq 0} \frac{V \cap(\rad\fa)^nQ}{V\cap(\rad\fa)^{n+1}Q}$$
 is a $\gr_\fa B$-submodule of $\gr_\fa Q$, in which the action of $G$ on each $(\gr^\#_\fa V)_n$ agrees with the
 action of $\Dist(G)$ through $B/(\rad\fa)B=(\gr_\fa B)_0$. 
 
 (c) If $B^{\prime\prime}=\Dist(G)/I^{\prime\prime}$ is any finite dimensional quotient algebra of $\Dist (G)$
 by an ideal $I^{\prime\prime}\subseteq I$, then both (a) and (b) above hold with $B$ replaced by $B^{\prime\prime}$.
\end{thm}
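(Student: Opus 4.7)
The plan is to emulate the proof of Theorem~\ref{maintheorem}, using the auxiliary algebraic group $G^\diamond$ from strong stability to provide the $G$-action, but replacing the infinite-dimensional algebras $\Dist(G)$ and $\Dist(G^\diamond)$ by carefully chosen finite-dimensional quotients via Lemma~\ref{lemma2}. The need for this replacement is that, when $N$ is not infinitesimal, $\rad\Dist(N)$ is not a well-defined nilpotent ideal; however, the image of $\Dist(N)$ in any finite-dimensional quotient of $\Dist(G^\diamond)$ is finite dimensional with nilpotent radical, which lets the grading argument proceed.

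For the setup, strong $G$-stability furnishes the sequence $1\to U\to G^\diamond\to G\to 1$ and realizes $Q$ as a rational $G^\diamond$-module, so Lemma~\ref{lemma1} produces an algebra homomorphism $\Dist(G^\diamond)\to\End(Q)$ whose kernel $K^\diamond$ has finite codimension. The map $\Dist(\pi)\colon\Dist(G^\diamond)\twoheadrightarrow\Dist(G)$ has kernel $\Dist^+(U)\Dist(G^\diamond)$, and $\widetilde I':=\Dist(\pi)^{-1}(I')$ is a finite-codimension two-sided ideal of $\Dist(G^\diamond)$ containing this kernel. Let $K_N\subseteq\Dist(N)$ denote the finite-codimension kernel of $\Dist(N)\to\End(Q)$. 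To secure the factoring condition in part~(a), first extend $K_N$ to a finite-codimension two-sided ideal $\widetilde K_N\subseteq\Dist(G)$ with $\widetilde K_N\cap\Dist(N)\subseteq K_N$, by applying Lemma~\ref{lemma2} to the finite-codimension subspace $K_N+W$, where $W$ is any vector-space complement to $\Dist(N)$ in $\Dist(G)$. Set $I_0:=\widetilde K_N\cap I'$, pull back to $\widetilde I_0:=\Dist(\pi)^{-1}(I_0)\subseteq\widetilde I'$, and take $J^\diamond:=\widetilde I_0\cap K^\diamond\cap\operatorname{ann}(V)$, a finite-codimension two-sided ideal of $\Dist(G^\diamond)$ that simultaneously kills $Q$ and $V$ and respects $I'$. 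Define $B^\diamond:=\Dist(G^\diamond)/J^\diamond$ and $B:=\Dist(G)/I$ with $I:=\Dist(\pi)(J^\diamond)\subseteq I_0\subseteq I'$, and let $\fa^\diamond,\fa$ denote the respective images of $\Dist(N)$. By construction $I\cap\Dist(N)\subseteq K_N$, so the action of $\Dist(N)$ on $Q$ factors through $\fa$, establishing the first assertion of part~(a).

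With these finite-dimensional replacements in hand, the proof of Theorem~\ref{maintheorem} now applies almost verbatim. The varietal decomposition $G^\diamond=U\times G'$ realizes $B^\diamond$ as the image of $\Dist(U)\otimes\Dist(G')$, with the image of $\Dist(U)$ commuting elementwise with the image of $\Dist(N)$; filtering by powers of $\rad\fa^\diamond$ produces a short exact sequence of graded algebras whose kernel is generated by the image of $\Dist^+(U)$ in degree $0$ and whose cokernel is $\gr_\fa B$. The key identity $\Dist^+(U)\cdot Q\subseteq J_V\cdot Q\subseteq(\rad\fa^\diamond)\cdot Q$—holding because $U\subseteq 1+J_V$ and $J_V$, being a nilpotent ideal of $\End_N(Q)$, lies inside $\rad\End_N(Q)$ and hence annihilates the $N$-head of $Q$—combined with the commutation of $\Dist(U)$ with $\Dist(N)$, forces the image of $\Dist^+(U)$ to act trivially on $\gr_{\fa^\diamond}Q$. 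Hence the $\gr_{\fa^\diamond}B^\diamond$-action on $\gr_\fa Q$ descends to an action of $\gr_\fa B$, completing part~(a). Part~(b) follows because each $V\cap(\rad\fa)^nQ$ is $G$-stable by the strong-stability identity $\alpha(g)v=gv$, so $\gr^\#_\fa V$ is a $\gr_\fa B$-submodule of $\gr_\fa Q$ whose $G$-action on each grade agrees, by construction, with the $(\gr_\fa B)_0=B/(\rad\fa)B$-action. Part~(c) is formal: given $I''\subseteq I$, rerun the construction with $I'$ replaced by $I''$, obtaining $B^{\prime\prime\diamond}$ that surjects onto $B^\diamond$ and transporting the graded structure.

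The main technical obstacle is the engineering of $J^\diamond$: arranging simultaneously that it kills $Q$ and $V$, is contained in $\widetilde I'$, and satisfies $J^\diamond\cap\Dist(N)\subseteq K_N$ so that the identification of $\fa$ with $\Dist(N)/K_N$ is literally correct. Lemma~\ref{lemma2} is precisely the tool that permits this chain of finite-codimension choices to be realized by two-sided ideals of $\Dist(G)$ and $\Dist(G^\diamond)$. Once $J^\diamond$ is in place, the remainder—particularly the triviality of the $\Dist^+(U)$-action on the associated graded—is essentially the same observation used in the infinitesimal case of Theorem~\ref{maintheorem}.
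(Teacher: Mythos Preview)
Your overall strategy—use $G^\diamond$, pass to finite-dimensional quotients via Lemma~\ref{lemma2}, and show the contribution of $\Dist^+(U)$ vanishes on the associated graded—is indeed the paper's strategy, and your ``key identity'' $J_V\cdot Q\subseteq(\rad\fa)Q$ is correct and is morally the reason the action descends. However, the execution diverges from the paper at a crucial point, and a real gap results.

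The paper does \emph{not} form a two-sided ideal quotient $B^\diamond=\Dist(G^\diamond)/J^\diamond$. Instead it keeps the vector-space (not algebra) factorization $\Dist(G^\diamond)=\Dist(U)\otimes\Dist(1,G)$, where $\Dist(1,G)$ is only a \emph{subspace} mapping bijectively to $\Dist(G)$ under $\Dist(\pi)$. The ideal $I$ is chosen inside the kernel of the \emph{linear} (not multiplicative) map $\phi_G\colon \Dist(G)\cong\Dist(1,G)\subseteq\Dist(G^\diamond)\xrightarrow{\phi}\End(Q)$, and the paper works with $D_1\otimes B_2$ where $B_2=\Dist(1,G)/\iota_G(I)$. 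This tensor structure survives the grading, and lets the paper compute the obstructing ideal in grade $i$ explicitly as the image of $J_V\cdot(\rad\fa)^i\bar\phi(B_2)$, to which your identity (together with the commutation of $J_V$ with $\fa$) applies directly.

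By contrast, once you pass to $B^\diamond=\Dist(G^\diamond)/J^\diamond$ the tensor decomposition is lost, and your assertion that ``the kernel is generated by the image of $\Dist^+(U)$ in degree~$0$'' is unsupported. The actual kernel of $\gr_{\fa^\diamond}B^\diamond\to\gr_\fa B$ is $\gr^\#_{\fa^\diamond}K$ with $K=\ker(B^\diamond\to B)$; to apply your identity you would need every $\kappa\in K\cap(\rad\fa^\diamond)^nB^\diamond$ to act on $Q$ through something of the form $J_V\cdot(\rad\fa)^n\cdot(\text{operator})$, and there is no argument for this without the tensor structure. (Concretely: lifting $\kappa$ to $\Dist(G^\diamond)$, the two conditions $\kappa\in K$ and $\kappa\in(\rad\fa^\diamond)^nB^\diamond$ produce lifts that differ by an element of $J^\diamond$, which has no relation to the tensor splitting.) Your construction of $I$ as $\Dist(\pi)(J^\diamond)$ also does not obviously satisfy the paper's requirement $I\subseteq\Ker\phi_G$, which is what makes $\bar\phi\colon D_1\otimes B_2\to\End(Q)$ well defined in the first place. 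So the claim that ``the proof of Theorem~\ref{maintheorem} now applies almost verbatim'' skips precisely the work that distinguishes the general case from the infinitesimal one.
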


\begin{proof}Recall the 
 construction of the group $G^\diamond$ described in \S2. The factorization $G^\diamond=(U,1)\times
(1,G)$ as $k$-schemes (varieties, in fact) provides a tensor decomposition
$$k[G^\diamond]\cong k[U]\otimes\ k[G]$$
of coordinate algebras, 
and a vector space decomposition
$$\Dist(G^\diamond)\cong \Dist(U)\otimes\Dist(1,G),$$
compatible with the multiplication of subspaces in $\Dist(G^\diamond)$.
Here we have identified $\Dist(U)$ with a subalgebra, stable under the adjoint action, of $\Dist(G^\diamond)$.
The subspace $\Dist(1,G)$ of $\Dist(G^\diamond)$ is the image of $\Dist(G) =\Dist(1,G)$ in
$\Dist(1,G^\diamond)=\Dist(G^\diamond)$, using the procedure of [Jan; I,\S7] of defining distribution
algebras for arbitrary $k$-schmes equipped with a distinguished rational point. We keep the notation
$\Dist(1,G)$ as a reminder that $\Dist(1,G)\subseteq\Dist(G^\diamond)$ is only a subspace, not
a subalgebra of $\Dist(G^\diamond)$. The subspace $\Dist(1,G)$ contains the identity element of $\Dist(G^\diamond)$ and
is stable under left and right multiplication by $\Dist(N)$, identifying the latter algebra with its isomorphic
copy $\Dist(\iota(N))\subseteq\Dist(G^\diamond)$. Finally, under the natural map $\Dist(G^\diamond)\to
\Dist(G)$ of algebras, the subspace $\Dist(1,G)$ maps bijectively onto $\Dist(G)$.

Consider the algebra homomorphism $\phi:\Dist(G^\diamond)\to\End_k(Q)$ which induced by the affine
algebraic group map $G^\diamond\to GL_k(Q)$, which is the identity on
$U=1_Q+J_V$, which is contained in the unit group of the algebra $k+J_V$. Thus, $\Dist(U)$ has image
contained in (and, so, equal to) $k+J_V$. The composite algebra homomorphism $\Dist(U)\to k+J_V\to k$ is the
augmentation homomorphism of $\Dist(U)$. (To see this, note that $V$ is a trivial $U$-submodule of $A$, hence is a trivial $\Dist(U)$-submodule of $Q$. Thus, multiplication of $\Dist(U)$ on $V$ factors through the augmentation map $\Dist(U)\to k$. On the other hand, multiplication of $\Dist(U)$ on $Q$ factors through the
map $\Dist(U)\to k+J_V$ by Lemma \ref{lemma1}. If the action is restricted to $V$, $J_V$ kills $V$, so that
the restriction map $k+J_V\to\End_k(V)$ factors through $k+J_V\to k$. That is, the action of
$\Dist(U)$ on $V$, which we know to be trivial, is through $\Dist(U)\to k+J_V\to k$, and so the latter composition map is the augmentation homomorphism). As a consequence, the map $\Dist(U)\to k+J_V$ sends
$\Dist^+(U)$ to $J_V$. That is, $\phi(\Dist^+(U))=J_V$.

We next consider the restriction $\phi|_{\Dist(1,G)}$ and its composite $\phi_G=\phi|_{\Dist(1,G)}\circ
\iota_G$ with the isomorphism $\iota_G:\Dist(G)\cong\Dist(1,G)$. The latter map is both an isomorphism
of vector spaces and of $\Dist(N)$-bimodules. Since the vector space map $\phi_G:\Dist(G)\to\End_k(Q)$
has a finite dimensional image, its kernel $\Ker\phi_G$ has finite codimension in $\Dist(G)$. By Lemma
\ref{lemma2}, $\Ker\phi_G$ contains a two-sided ideal $I$ of $\Dist(G)$ of finite codimension in $\Dist(G)$.

We let $B$ denote the finite dimensional algebra $\Dist(G)/I$. Replacing the ideal $I$ by the ideal $I\cap I'$,
we can assume that $I\subseteq I'$. (The ideal $I$ could be replaced by any ideal contained in it and having
finite codimenion.)

For brevity, we write
\begin{itemize}
\item[(1)] $D_1=\Dist(U)$;
\item[(2)] $D_2=\Dist(1,G)$;
\item[(3)] $I_2=\iota_G(I)$, where, by abuse of notation, $\iota_G:\Dist(G)\overset\sim\to \Dist(1,G)$, a subspace of $\Dist(1,G)$.
\item[(4)]  $B_2=D_2/I_2 \,\,(\cong \Dist(G)/I=B$), an isomorphism of vector spaces.
\end{itemize}

Recall the algebra $\fa=(\Dist(N)+I)/I$ defined in the theorem.  Let $r(N)=r_I(N)$ denote the inverse image of $\rad\fa$
under the natural map $\Dist(N)\twoheadrightarrow\fa$. The latter map is equivariant with respect to
$G$-conjugation. Consequently, $r(N)$ is stable under $G$-conjugation, and so is stable under the
adjoint action of $\Dist(G)$, in either its left or right hand version. It then follows easily, from Hopf algebra
calculations, that
$$r(N)\Dist(G)=\Dist(G)r(N).$$
We leave the verification of this fact to the reader. Since the $\Dist(N)$-bimodule $D_2$ is isomorphic to
$\Dist(G)$, we also have
$$r(N)D_2=D_2r(N).$$
Similarly, we have identities $(\rad\fa)B=B(\rad\fa)$ and $(\rad\fa)B_2=B_2(\rad\fa)$. Finally, viewing
$D_1\otimes D_2$ and $D_1\otimes B_2$ as $\Dist(N)$-bimodules, and noting that multiplication by
$\Dist(N)$ and $\Dist(U)$ commute (elementwise), we have
$$r(N)(D_1\otimes D_2)=D_1\otimes r(N)D_2=D_1\otimes D_2r(N)=(D_1\otimes D_2)r(N)$$
and
$$(\rad\fa)(D_1\otimes B_2)=(D_1\otimes (\rad\fa)B_2=D_1\otimes B_2(\rad\fa)=(D_1\otimes B_2)\rad\fa.$$
In each instance,  the action of $\rad\fa$ in the lower equation is, by definition, induced from
the corresponding action of $r(N)$ in the upper equations. These all give well-defined actions of $\rad \fa$,
since the left and right actions of $\rad\fa$ on $B_2$ are both induced from corresponding $r(N)$-actions.

For any $\Dist(N)$-module $M$, possibly infinite dimensional, we define
$$\gr_NM=\gr_{N,r(N)}M:=\bigoplus_{i\geq 0}\frac{r(N)^iM}{r(N)^{i+1}M}.$$
Then $\gr_NM$ is a positively graded vector space. If $\psi:M\to M^{\prime\prime}$ is
a morphism of $\Dist(N)$-modules, then there is a natural homomorphism $\gr_N\psi:\gr_NM\to
\gr_NM^{\prime\prime}$ of graded vector spaces. If $\psi$ is surjective with kernel $M'$, then $\gr_N\psi$ is surjective
with kernel $\gr^\#_NM'\to\gr_NM$, where
$$\gr^\#_NM':=\bigoplus_{i\geq 0}\frac{M'\cap r(N)^iM}{M'\cap r(N)^{i+1}M}$$
maps injectively in an obvious way to $\gr_NM$.

We will now apply these constructions to the diagram
\newcommand{\ep}{\end{picture}}
\newcommand{\bp}{\begin{picture}}
\newcommand{\script}{\scriptstyle}

$$
\begin{array}{ccccccccc}
0 \hspace*{-6pt}
& \bp(20,0)\put(0,3){\vector(1,0){20}}\ep
& D_1^+ \otimes D_2
& \bp(20,0)\put(0,3){\vector(1,0){20}}\put(6,6){}\ep
& \mbox{Dist}(G^\diamond)
& \bp(20,0)\put(0,3){\vector(1,0){15}}\put(5,3){\vector(1,0){15}}\put(6,6){$\script\alpha$}\ep
& \mbox{Dist}(G)
& \bp(20,0)\put(0,3){\vector(1,0){20}}\ep
& \hspace*{-6pt} 0 \\[1mm]
&&&& \bp(5,20)\put(0,20){\vector(0,-1){20}}\put(0,20){\vector(0,-1){15}}\put(4,9){$\script\delta$}\ep
&& \bp(5,20)\put(0,20){\vector(0,-1){20}}\put(0,20){\vector(0,-1){15}}\put(4,9){$\script\beta$}\ep
&& \\[1mm]
0 \hspace*{-6pt}
& \bp(20,0)\put(0,3){\vector(1,0){20}}\ep
& D_1^+ \otimes B_2
&  \bp(20,0)\put(0,3){\vector(1,0){20}}\put(6,6){}\ep& D_1 \otimes B_2
& \bp(20,0)\put(0,3){\vector(1,0){15}}\put(5,3){\vector(1,0){15}}\put(6,6){$\script\gamma$}\ep
& B
& \bp(20,0)\put(0,3){\vector(1,0){20}}\ep
& \hspace*{-6pt} 0 \\[1mm]
&&&& \bp(5,20)\put(0,20){\vector(0,-1){20}}\put(0,20){\vector(0,-1){15}}\put(4,9){$\script\bar{\phi}$}\ep
&& \\[1mm]
&&&& \phi(\mbox{Dist}(G^\diamond))
&&
\end{array}  $$
where $D_1^+=\Dist^+(U)$, the middle vertical map is the natural factorization of $\phi$ (from
$\phi(D_1\otimes I_2)=\phi(D_1)\phi(I_2)=0$), and the square in the upper right is commutative. All
objects in the diagram are (at least) left $\Dist(N)$-modules, and all maps are (at least) $\Dist(N)$-module
homomorphisms.

Objects $\Dist(G^\diamond)=D_1\otimes D_2$, $\Dist(G)$ and $B$ are algebras, as are
$\gr_N\Dist(G^\diamond)$, $\gr_N\Dist(G)$, and $\gr_NB$. This is a consequence of the
commuting properties of $r(N)$ discussed above. Both of the maps $\alpha$ and $\beta$ are algebra
homomorphisms, as are the graded maps $\gr_N\alpha$ and $\gr_N\beta$.

The commutative square $\beta\alpha=\gamma\delta$ remains commutative after applying $\gr_N$ to each
term, so there is an induced map
$$\tau:\Ker\gr_N\alpha\to\Ker\gr_N\gamma.$$
In grade $i\in\mathbb N$, 
$$\tau:\frac{D_1^+\otimes D_2\cap(D_1\otimes r(N)^iD_2)}
{D_1^+\otimes D_2\cap (D_1\otimes r(N)^{i+1}D_2)}\longrightarrow
\frac{D_1^+\otimes B_2\cap (D_1\otimes r(N)^iB_2)}{D_1^+\otimes B_2\cap (D_1\otimes r(N)^{i+1}B_2)}$$
in the obvious way, using the surjection $D_2\twoheadrightarrow B_2$. However,
$$\begin{cases} D_1^+\otimes D_2\cap(D_1\otimes r(N)^iD_2)=D_1^+\otimes r(N)^iD_2\\
D_1^+\otimes B_2\cap (D_1\otimes r(N)^iB_2)=D_1^+\otimes r(N)^iB_2.\end{cases}$$
Consequently, $\tau$ is surjective.

Put $Y=\gr_N\phi(\Dist(G^\diamond)=\gr_N\bar\phi(D_1\otimes B_2)$, and let $X$ be the image
in $Y$ of $\Ker\gr_N\gamma$ under the composite of the maps $\Ker\gr_N\gamma\to\gr_N(D_1\otimes B_2)$
and $\gr_N\bar\phi:\gr_N(D_1\otimes B_2)\to Y$, and let $Z=X/Y$ be the graded quotient space. We have a big commutative diagram with exact rows, and with all vertical maps surjective:

$$
\begin{array}{ccccccccc}
0 \hspace*{-6pt}
& \bp(20,0)\put(0,3){\vector(1,0){20}}\ep
& \mbox{gr}^{\#}_N(D^+_1 \otimes D_2)
& \bp(20,0)\put(0,3){\vector(1,0){20}}\ep
& \mbox{gr}_N\mbox{Dist}(G^\diamond)
& \bp(20,0)\put(0,3){\vector(1,0){20}}\put(1,7){$\script{\rm gr}_{\!N}\alpha$}\ep
& \mbox{gr}_N\mbox{Dist}(G)
& \bp(20,0)\put(0,3){\vector(1,0){20}}\ep
& \hspace*{-6pt}0 \\[1mm]
&&
\bp(5,20)\put(0,20){\vector(0,-1){20}}\put(-7,9){$\script\tau$}\ep
&& \bp(5,20)\put(0,20){\vector(0,-1){20}}\put(0,20){\vector(0,-1){15}}\put(4,9){$\script{\rm gr}_{\!N}\delta$}\ep
&& \bp(5,20)\put(0,20){\vector(0,-1){20}}\put(0,20){\vector(0,-1){15}}\put(4,9){$\script{\rm gr}_{\!N}\beta$}\ep
&& \\[1mm]
0 \hspace*{-6pt}
& \bp(20,0)\put(0,3){\vector(1,0){20}}\ep
& \mbox{gr}^{\#}_N(D_1^+ \otimes B_2)
& \bp(20,0)\put(0,3){\vector(1,0){20}}\ep
& \mbox{gr}_N D_1 \otimes B_2
& \bp(20,0)\put(0,3){\vector(1,0){20}}\put(1,9){$\script{\rm gr}_{\!N}\gamma$}\ep
& \mbox{gr}_NB
& \bp(20,0)\put(0,3){\vector(1,0){20}}\ep
& \hspace*{-6pt} 0 \\[1mm]
&& \bp(5,20)\put(0,20){\vector(0,-1){20}}\put(0,20){\vector(0,-1){15}}\ep
&& \bp(5,20)\put(0,20){\vector(0,-1){20}}\put(0,20){\vector(0,-1){15}}\put(4,9){$\script{\rm gr}_{\!N}\bar{\phi}$}\ep
&& \bp(5,20)\put(0,20){\vector(0,-1){20}}\ep
&& \\[1mm]
0 \hspace*{-6pt}
& \bp(20,0)\put(0,3){\vector(1,0){20}}\ep
& \hspace*{-7pt} X
&  \bp(20,0)
\put(1,3){\vector(1,0){16}}\ep
& \hspace*{-7pt} Y
& \bp(20,0)\put(0,3){\vector(1,0){20}}\ep
& \hspace*{-7pt} Z
& \bp(20,0)\put(0,3){\vector(1,0){20}}\ep
& \hspace*{-6pt} 0
\end{array}
$$

Since $\gr_N\alpha$ is a graded algebra homomorphism, the image in $\gr_N\Dist(G^\diamond)$ of $\gr^\#(D_1^+\otimes D_2)$ is a graded ideal. Hence, the image $X$ of this ideal under the graded algebra
surjection $\gr_N\phi:\gr_N\Dist(G^\diamond)\twoheadrightarrow\gr_N\phi(\Dist(G^\diamond))$ is also a
graded
ideal. In
particular, $Z=Y/X$ has the structure of a graded algebra. Also, the right hand vertical map $
\gr_N\Dist(G)\twoheadrightarrow Z$ is a graded algebra homomorphism. Since the surjection
$\gr_N\Dist(G)\to\gr_NB$ is a graded algebra homomorphism, it follows that $\gr_NB\to Z$ is a graded algebra
homomorphism.

To prove that $\gr_NB$ acts on $\gr_NQ$, it suffices to show that the graded algebra $Z$ acts on
$\gr_NQ$. Then $\gr_NB$ will acts through the (surjective) algebra homomorphism $\gr_NB\to
X$. However, the algebra $Y$ already acts on $\gr_NQ$, since $\gr_N\bar\phi\circ \gr_N\delta=\gr_N\phi$.
Thus, it suffices to show that $X$ acts trivially on $\gr_NQ$.

The graded ideal $X$ may be computed from its definition as an image of $\Ker\gr_N\gamma$ in $Y$,
and from the discussion of the individual grades of that kernel. For $i\in\mathbb N$,
$$X_i=\frac{J_V(\rad\fa)^i\bar\phi(B_2)+(\rad\fa)^{i+1}\phi(\Dist(G^\diamond)}{(\rad\fa)^{i+1}\phi(\Dist(G^\diamond)},$$
which acts trivially on $\gr_NQ$. Thus, the module $\gr_NQ$ becomes $\gr_NB$-module.

Statement (a) follows immediately. The first statement in (b) is obvious. The action of $G$ on $V$
comes from the action of $G^\diamond$. For $\gr_NV$, the action of $J_V$ is trivial, so the second
statement in (b) holds.
\end{proof}

\end{document}